\allowdisplaybreaks \numberwithin{equation}{section}
\numberwithin{equation}{section}
\newtheorem{theorem}{Theorem}[section]
\newtheorem{proposition}[theorem]{Proposition}
\newtheorem{lemma}[theorem]{Lemma}
\theoremstyle{definition}
\newtheorem{definition}[theorem]{Definition}
\theoremstyle{remark}
\newtheorem{remark}[theorem]{Remark}
\def\d{\mathrm{d}}
\newcommand{\R}{\mathbb{R}}
\newcommand{\LL}{\mathcal{L}}
\newcommand{\Int}{\operatorname{int}}
\newcommand{\M}{\mathcal{M}}
\newcommand{\T}{\mathcal{T}}
\newcommand{\s}{\mathcal{S}(\R^2)}
\newcommand{\be}{\begin{equation}}
\newcommand{\ee}{\end{equation}}
\newcommand{\p}{\partial}
\begin{document}

\title[Radial symmetry of stationary and uniformly-rotating solutions]{Remarks on radial symmetry of stationary and uniformly-rotating solutions for the 2D Euler equation}

\author{Boquan Fan, Yuchen Wang, Weicheng Zhan}

\address{School of Mathematical Sciences and Institute of Natural Sciences, Shanghai Jiao Tong
University, 800 Dongchuan Road, Shanghai, P. R. China}
\email{fanboquan22@mails.ucas.ac.cn}

\address{School of Mathematics Science, Tianjin Normal University, Tianjin,  300387, P. R. China}
\email{wangyuchen@mail.nankai.edu.cn}

\address{School of Mathematical Sciences, Xiamen University, Xiamen, Fujian, 361005, P. R. China}
\email{zhanweicheng@amss.ac.cn}

\begin{abstract}
We prove that any uniformly rotating solution of the 2D incompressible Euler equation with compactly supported vorticity $\omega$ must be radially symmetric whenever its angular velocity satisfies $\Omega \in (-\infty,\inf \omega / 2] \cup \, [ \sup \omega / 2, +\infty )$, in both the patch and smooth settings. This result extends the rigidity theorems established in \cite{Gom2021MR4312192} (\textit{Duke Math. J.},170(13):2957-3038, 2021), which were confined to the case of non-positive angular velocities and non-negative vorticity. Moreover, our results do not impose any regularity conditions on the patch beyond requiring that its boundary consists of Jordan curves, thereby refining the previous result to encompass irregular vortex patches.
\end{abstract}

\maketitle


\bibliographystyle{plain}

\section{Introduction}

Consider the 2D incompressible Euler equation in the Euclidean plane $\R^2$. In the vorticity formulation the system is given by:
\begin{align}\label{1-1}
\begin{cases}
\partial_t\omega+\mathbf{v}\cdot \nabla\omega=0,\ (x,t)\in \R^2\times\mathbb{R}_+,&\\
\mathbf{v}=\nabla^\perp (-\Delta)^{-1}\omega, &\\
\omega|_{t=0}=\omega_0, &\\
\end{cases}
\end{align}
where $\nabla^\perp:= (\partial_2, -\partial_1)^T$ is the skew-gradient operator, $\mathbf{v}(x,t)= (v_1, v_2)^T$ is the velocity of the fluid and $\omega(x,t):=\partial_1 v_2 - \partial_2 v_1$ its vorticity. The second equation in \eqref{1-1}, commonly referred to as the \emph{Biot-Savart law}, recovers the velocity from the vorticity:
\[
\mathbf{v}(\cdot, t) = \nabla^\perp \left(\mathcal{N} \ast \omega(\cdot, t) \right),
\]
in which $$\mathcal{N}(x):=\frac{1}{2\pi}\ln \frac{1}{|x|}$$ is
 the Newtonian potential.

The global well-posedness of classical solutions to \eqref{1-1} was proved by Wolibner \cite{Wolibner1933}, see also  \cite{Marchioro1994}.  In his seminal work \cite{Yud1963MR158189}, Yudovich established the global well-posedness of weak solutions  $\omega \in L^\infty(\R^2) \,\cap \,L^1(\R^2)$.
As a consequence, since system \eqref{1-1} is considered as a non-local transport equation concerning the vorticity $\omega$, it follows that
the vorticity is only transported by the globally-in-time 2D Euler flows.
Specifically, Yudovich's theorem implies that a vortex patch solution of \eqref{1-1}, representing vorticity uniformly distributed in a bounded domain $D \subset \R^2$, i.e., $\omega_0 = 1_D$, with $1_D$ denoting the indicator function of domain $D$, retains the patch structure throughout its evolution. Consequently, the dynamics of a vortex patch $\omega=1_{D(t)}$ reduces to the boundary dynamics of its vortical domain:
\begin{equation} \label{E:1-2}
\partial_t x(\,\cdot\,,t) \cdot \vec{n}(x,t) = \mathbf{v}(x,t)\cdot \vec{n}(x,t),
\end{equation}
where the map $x(\,\cdot\,,t): \mathbb{T} \rightarrow \partial D(t)$ parameterizes the evolving boundary $\partial D(t)$, and $\vec{n}(x,t)$ denotes the outward unit normal to $\partial D(t)$ at $x$. 
The global regularity of vortex patch was established by Chemin \cite{Chemin1993}. Another elementary proof was provided by Bertozzi and Constantin \cite{Bertozzi1993}. For further details, we refer interested readers to  \cite{Bed2022MR4475666, Maj2002MR1867882} and references therein.

By equation \eqref{E:1-2}, $\omega=1_D$ is called as a  \emph{stationary patch solution} to \eqref{1-1} or a \emph{stationary vortex patch}, if
\[
\mathbf{v}(x)\cdot \vec{n}(x)=0 \text{ on } \partial D,
\]
meaning the patch remains unchanged over time.
Similarly, a vorticity distribution of the form  $\omega(x, t)=1_D(e^{-i\Omega t} x)$ is called a \emph{uniformly rotating vortex patch} with angular velocity $\Omega$ if $1_D$ becomes stationary in a frame rotating with angular velocity $\Omega$:
\begin{equation} \label{E:1-3}
\left(\nabla^\perp \left(\mathcal{N}\ast 1_D \right)+\Omega x^\perp\right)\cdot \vec{n}(x)=0 \text{ on } \partial D.
\end{equation}
Equation \eqref{E:1-3} gives rise to the integral equation
\begin{equation}\label{1-4}
	\mathcal{N}\ast 1_D+\frac{\Omega}{2}|x|^2\equiv C\ \ \ \text{on}\ \partial D,
\end{equation}
where the constant $C$ may vary across different connected components of $\partial D$. Obviously, uniformly rotating vortex patch recovers stationary vortex patch when $\Omega=0$, thus the stationary case is included in \eqref{1-4} also. It worths to noting that, compared with \eqref{E:1-3}, equation \eqref{1-4} is well-defined even $\p D$ is merely continuous. As a consequence, $\p D$ is allowed to consist of a union of ``rough'' Jordan curves, making the definition applicable to irregular vortex patches. Here the term ``irregular" refers to patch boundaries that are not $C^1$-smooth and may be highly singular, such as non-rectifiable or fractal sets.

Furthermore, equation \eqref{1-4} can be naturally extended to multi-patch solutions. Let $D_j$ $(j = 1, \dots, n)$ be bounded domains in $\R^2$, and let $\alpha_j$ $(j = 1, \dots, n)$ denote given constants.
We say that
\begin{equation}\label{1-4-0}
 \omega_0= \sum_{j=1}^n \alpha_j 1_{D_j}
\end{equation}
is a \emph{uniformly rotating multi-patch solution} for \eqref{1-1} with angular velocity $\Omega$ (which reduces to a stationary solution when $\Omega = 0$), if the following integral equation holds:
\begin{equation}\label{1-4-1}
   \mathcal{N}\ast \omega_0+\frac{\Omega}{2}|x|^2\equiv C_j\ \ \ \text{on}\ \partial D_j,\ j=1, \dots, n,
\end{equation}
where the constant $C_j$ is allowed to varing across different connected components of $\partial D_j$.

Analogously, we call $\omega(x, t) = \omega_0(e^{-i\Omega t} x)$ a \emph{uniformly rotating smooth solution} of \eqref{1-1} with angular velocity $\Omega$ (reducing to a stationary solution when $\Omega = 0$), provided the vorticity profile $\omega_0 \in C^2(\R^2)$ satisfies
\[
\left(\nabla^\perp \left(\mathcal{N}\ast \omega_0 \right) + \Omega x^\perp\right) \cdot \nabla \omega_0 = 0.
\]
This equation leads to the following integral equation
\begin{equation}\label{1-5}
   \mathcal{N}\ast \omega_0+\frac{\Omega}{2}|x|^2\equiv C\ \ \
\end{equation}
on each connected component of the regular level set $\{\omega_0=c\}$,
where the constant $C$ is allowed to vary between different connected components of the given regular level set.  \\

The main objective of this paper is to investigate the rigidity properties--the condition under which the solutions are necessarily radially symmetric--of uniformly rotating solutions to \eqref{1-1} with  compactly supported  \emph{sign-changing} vorticity, in both the patch and smooth settings. As noted on Page 7 of \cite{Gom2021MR4312192}, the authors mentioned that the requirement $\omega_0 \geq 0$ seems unnatural at first glance, and constructed non-radial patch solutions in the subsequent work \cite{Gom2024} if the vorticity is allowed to change its sign. We aim to show that the essential aspect of rigidity lies in the superharmonc/subharmonic nature of the stream function.

The radial symmetry properties of stationary simply-connected vortex patches have been established using the moving plane method; see \cite{Fra2000MR1751289}. Furthermore, under an additional convexity assumption on the patch, Hmidi \cite{Hmi2015MR3427065} provided an affirmative answer regarding the radial symmetry of rotating simply-connected patch solutions
when the angular velocity satisfies $\Omega \in (-\infty,0) \cup \{\frac{1}{2}\}$. In these works, the main idea is to reformulate
stationary/uniformly rotating 2D Euler flows as a globally defined semilinear elliptic equation in terms of the stream function $\psi$, where the vorticity is
connected with the stream function through a scalar function $f\in L^\infty(\R)$, known as the vorticity strength function, satisfying $\omega = f(\psi)$.
Consequently,
the radial symmetry of the solutions to \eqref{1-1} follows from  Liouville-type theorems for elliptic equations; see, for example, \cite{Gidas1979,Serra2013}. Moreover, this methodology has been  extensively applied to the study of border rigidity phenomena in the stationary Euler flows under certain structure assumptions \cite{Gui2024,Ham2019MR3951689,Hamel2023MR4556785,Ruiz2023,Wang2023}.

However, the correspondence between the vorticity strength function and the vorticity of steady 2D Euler flows is typically local, meaning that the vorticity strength function may not exist globally.
In the breakthrough work \cite{Gom2021MR4312192}, G\'{o}mez-Serrano, Park, Shi, and Yao established that a uniformly rotating vortex patch with angular velocity $\Omega \in(-\infty,0] \cup [1/2,+\infty)$ must exhibit radial symmetry, regardless of whether the domain is simply-connected, provided the patch boundaries are Lipschitz continuous Jordan curves. This smoothness requirement can be further relaxed to rectifiable Jordan curves, when the vortex patch is simply-connected and $\Omega \leq 0$. The bounds on the angular velocity $\Omega$ in this result are both sharp, as demonstrated by a series of studies on the bifurcations of vortex patches \cite{Hmidi2016,Hmi2013MR3054601}.

It is worth noting that the regularity of the patch boundary plays a crucial role in the analysis of \cite{Gom2021MR4312192}.  Although this assumption is mild and encompasses a large class of Jordan curves, it is significant to recognize that Jordan curves can exhibit highly complex geometries, including fractal structures such as the Koch snowflake. Indeed, a Jordan curve can even has a nonzero two-dimensional Lebesgue measure, see \cite{Sagan1994MR1299533}. These intricate examples lead to the interesting question of \emph{whether the radial symmetry properties established in \cite{Gom2021MR4312192} persist for vortex patch solutions with highly irregular boundaries}.

On the other hand, the radial symmetry properties for uniformly rotating multi-patch solutions and smooth solutions with compactly supported vorticity established in \cite{Gom2021MR4312192} rely on the assumption of
\emph{non-negative} vorticity and \emph{non-positive} angular velocities, cf. Corollary 2.10 and Theorem 2.12 in \cite{Gom2021MR4312192}.
This naturally raises the following question:

\emph{Under what conditions does a sign-changing uniformly rotating vorticity with negative angular velocity necessarily preserve radial symmetry?}

\noindent The approach employed in \cite{Gom2021MR4312192} seems to encounter challenges in dealing with sign-changing vorticity.

Motiviated by these questions, we revisit the radial symmetry issue of the stationary and uniformly rotating solutions on the plane, employing an alternative approach based on the \emph{continuous Steiner symmetrization} method and \emph{local symmetry} of the stream function it establishes.

Our first result concerning the patch solutions is given as follows:
\begin{theorem}\label{thm}
	Let $D_j$ $(j = 1, \dots, n)$ be bounded domains of $\R^2$, and let $\alpha_j$ $(j = 1, \dots, n)$ denote given constants. Assume that the boundary of each $D_j$ consists of a finite collection of mutually disjoint Jordan curves. Suppose $\omega_0=\sum_{j=1}^n \alpha_j 1_{D_j}$ is a uniformly rotating multi-patch solution to \eqref{1-1}, in the sense that it satisfies \eqref{1-4-1} for some $\Omega\in \R$. Then $\omega_0$ must be radially symmetric if $\Omega\le \inf \omega_0/2$ or $\Omega\ge \sup \omega_0/2$. Here, ``radially symmetric" should be interpreted as radially symmetric up to a translation when $\Omega = 0$.
\end{theorem}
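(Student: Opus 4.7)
My plan is to recast the rigidity problem in terms of the rotating-frame stream function, exploit the sign-definite Laplacian that the hypothesis on $\Omega$ provides, and then conclude radial symmetry via the continuous Steiner symmetrization method.

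First I would introduce
\[
\psi(x) := (\mathcal{N}\ast\omega_0)(x) + \tfrac{\Omega}{2}|x|^2,
\]
so that the integral identity \eqref{1-4-1} is equivalent to the statement that $\psi$ is constant on every connected Jordan component of $\bigcup_j \partial D_j$. Since $\omega_0\in L^\infty(\R^2)$ is compactly supported, $\psi\in C^{1,\alpha}_{\mathrm{loc}}(\R^2)$ for every $\alpha\in(0,1)$, and $-\Delta\psi = \omega_0 - 2\Omega$ distributionally. Under the hypothesis $\Omega\le \inf\omega_0/2$ one obtains $-\Delta\psi \ge 0$ on all of $\R^2$, i.e.\ $\psi$ is globally superharmonic; under $\Omega\ge \sup\omega_0/2$ one gets $\psi$ subharmonic, which reduces to the previous case by the substitution $\psi\mapsto -\psi+\mathrm{const}$. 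It is this sign condition on $\Delta\psi$, rather than on $\omega_0$ itself, that replaces the non-negativity hypothesis of \cite{Gom2021MR4312192} and accommodates sign-changing vorticity.

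Next, fix an arbitrary direction $e\in \mathbb{S}^1$ and the orthogonal line $H=e^\perp$, and apply Brock's continuous Steiner symmetrization flow $\{\psi^\tau\}_{\tau\ge0}$ with respect to $H$ to $\psi$. The sign-definiteness of $\Delta\psi$ together with the distributional identity $-\Delta\psi=\omega_0-2\Omega$ yields, by integration by parts on sublevel sets of $\psi$, a monotonicity of an appropriate (localised) Dirichlet-type functional along $\tau\mapsto \psi^\tau$. The boundary-value condition ``$\psi$ constant on $\partial D_j$'' then forces this monotonicity to degenerate to equality. By the equality case of Brock's theorem this gives \emph{local symmetry} of $\psi$ with respect to $H$: every connected component of every superlevel set $\{\psi>c\}$ is reflection-symmetric about a line parallel to $H$. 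Because $e$ was arbitrary, local symmetry holds in every direction. Together with the compact support of $\omega_0$ and the $C^{1,\alpha}$ regularity of $\psi$, a standard argument then implies that each bounded connected component of each superlevel set of $\psi$ is locally symmetric in every direction, hence is a Euclidean disc. Continuity and compact support pin the family of discs down to be concentric at a single centre, and when $\Omega\ne 0$ the quadratic term $\tfrac{\Omega}{2}|x|^2$ forces this centre to be the origin; for $\Omega=0$ the centre is free, which accounts for the ``up to translation'' caveat. Radial symmetry of $\omega_0=-\Delta\psi+2\Omega$ follows.

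The principal obstacle I expect is the equality-case step in the Brock symmetrization: all boundary information must be extracted through level-set analysis of the $C^{1,\alpha}$ function $\psi$, since the Jordan curves $\partial D_j$ carry no further regularity and cannot be manipulated directly. A secondary issue is the non-decay of $\psi$---logarithmic growth when $\int\omega_0\ne 0$, quadratic when $\Omega\ne 0$---which rules out a direct global energy argument and necessitates a careful localisation of the symmetrization estimates on large balls, with boundary contributions tracked explicitly.
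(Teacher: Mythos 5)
Your strategy is the paper's: pass to the rotating-frame stream function, use $\Omega\le\inf\omega_0/2$ to make it weakly superharmonic (reducing $\Omega\ge\sup\omega_0/2$ to this case via $-\psi$), and conclude via continuous Steiner symmetrization and Brock's local-symmetry criterion. But the two places where you defer to "a monotonicity that degenerates to equality" and "a careful localisation" are exactly where the proof lives, and you supply no mechanism for either. Concretely, Proposition \ref{pro2} requires showing that $\int|\nabla(u-c_0)_+^t|^2-\int|\nabla(u-c_0)_+|^2=o(t)$; one sign is free from the symmetrization, and for the other the paper first proves a comparison lemma (Lemma \ref{key1}) showing it suffices to establish the bound at \emph{some} lower truncation level $c_1<c_0$. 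Testing the equation against $(u-c_1)_+^t-(u-c_1)_+$ then reduces everything to the single inequality \eqref{3-6}, consisting of a far-field term $\int_{U^t}|\nabla u|^2-\int_U|\nabla u|^2$ over the superlevel domain $U=\{u>c_1\}$ and patch terms $\sum_j\alpha_j\int_{D_j}[(u-c_1)_+^t-(u-c_1)_+]$. The far-field term is not $o(t)$ for a fixed level; it is made smaller than $\varepsilon t$ only by sending $c_1=\gamma_k\to-\infty$, using the asymptotics of $\nabla u$ to show the level curves become asymptotically circular so that the CStS displaces them by $O(r_k^{-2}t)$ (with an extra convexity argument when $\Omega\neq0$). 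The patch terms are handled by Lemmas \ref{key2} and \ref{key3}: on the interior of each Jordan component one exhausts by regular level sets of $u$ from within and uses that $(u-\gamma_k)_+$ restricted there is rearranged by the flow, which is precisely how the rough Jordan boundary is never touched. You correctly identify both obstacles, but naming an obstacle is not the same as overcoming it; as written, the central estimate is asserted, not proved.

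There is a second gap at the endgame. Local symmetry in every direction does not say that superlevel components are "reflection-symmetric about a line parallel to $H$"; its content (Proposition \ref{pro1}) is a decomposition of $\{0<u<\sup u\}$ into countably many mutually disjoint annuli on which $u$ is radial and strictly decreasing, plus a critical set where $\nabla u=0$. Passing from this to "the superlevel set is a single disc on which $u$ is radial" requires invoking the superharmonicity a second time, through the Hopf lemma (Lemma \ref{key0}), to rule out configurations of nested annuli with distinct centres separated by flat regions; "continuity and compact support" do not pin the centres down. One also needs the connectedness of $\{u>c\}$ for $c$ sufficiently negative (from the maximum principle and the far-field asymptotics) before concluding that a countable union of disjoint discs is a single disc. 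These steps are standard once stated, but they are part of the argument and are missing from your outline.
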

\begin{remark}
When $n=1$ and $\alpha_1=1$, Theorem \ref{thm} refines the radial symmetry properties of uniformly rotating vortex patches established in Theorem A of \cite{Gom2021MR4312192}, where the patch boundaries are assumed to be smooth (in fact, rectifiability is sufficient, see Assumption (\textbf {HD}) therein), to include irregular vortex patches.
\end{remark}

\begin{remark}
The upper and lower bounds on the angular velocity given here are  sharp, see Appendix \ref{s5} for further details.
\end{remark}

The second result addresses on the radial symmetry properties of uniformly rotating smooth solutions of \eqref{1-1} with compactly supported vorticity, improving upon the previous result presented in Theorem B of \cite{Gom2021MR4312192}.
\begin{theorem}\label{thm2}
	Let $\omega_0 \in C^2(\R^2)$ have compact support. Assume $\omega(x, t)=\omega_0(e^{-i\Omega t} x)$ is a uniformly rotating smooth solution of \eqref{1-1}, in the sense that it satisfies \eqref{1-5} for some $\Omega\in \R$. Then $\omega_0$ must be radially symmetric if $\Omega\le \inf \omega_0/2$ or $\Omega\ge \sup \omega_0/2$. Here, ``radially symmetric" should be interpreted as radially symmetric up to a translation when $\Omega = 0$.
\end{theorem}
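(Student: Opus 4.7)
The plan is to transfer everything to the rotating-frame stream function
$$\psi := \mathcal{N}\ast \omega_0 + \frac{\Omega}{2}|x|^2 \in C^2(\R^2),\qquad -\Delta\psi = \omega_0 - 2\Omega\ \text{on}\ \R^2.$$
Under the hypothesis $\Omega\le\inf\omega_0/2$ the right-hand side is non-negative, so $\psi$ is superharmonic; under $\Omega\ge\sup\omega_0/2$ it is subharmonic. Since the two cases are exchanged by the involution $(\omega_0,\Omega)\mapsto(-\omega_0,-\Omega)$, which preserves \eqref{1-5}, I focus on the superharmonic case. Because $\omega_0$ has compact support, $\inf\omega_0\le 0$, so $\Omega\le 0$; hence $\psi(x)\to -\infty$ as $|x|\to\infty$ (quadratically when $\Omega<0$, logarithmically when $\Omega=0$ with positive total circulation), and the super-level sets $\{\psi>c\}$ are precompact.

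Next I would extract the full overdetermined content of \eqref{1-5}: $\psi$ is constant on every connected component of each regular level set of $\omega_0$, hence $\nabla\psi\parallel\nabla\omega_0$ wherever $\nabla\omega_0\ne 0$, so locally on the regular set $\omega_0=g(\psi)$ for a continuous $g$ and $\psi$ solves the semilinear equation $-\Delta\psi=g(\psi)-2\Omega$. In particular, on each such regular component $\tfrac12|\nabla\psi|^2$ is locally determined by a primitive $G$ of $g$ along the gradient flow of $\psi$, which will be the quantitative link that feeds into the rigidity step.

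I then fix a direction $e\in\mathbb{S}^1$ and consider the continuous Steiner symmetrization $\psi_t$ of $\psi$ along the line $\{x\cdot e=0\}$, built level-set by level-set from the precompact super-level sets $\{\psi>c\}$. By the Brock--Brothers--Ziemer theory, $t\mapsto\tfrac12\int|\nabla\psi_t|^2\,dx$ is non-increasing, while $\int F(\psi_t)\,dx$ is invariant for every Borel $F$. The key step is to combine \eqref{1-5} with the local semilinear structure $-\Delta\psi=g(\psi)-2\Omega$ to show that the Dirichlet energy is in fact \emph{stationary} at $t=0$: on the regular set, $\tfrac12|\nabla\psi|^2$ is determined by level-value data through a primitive of $g$ and hence contributes only via the distribution function, while the critical set contributes nothing to the co-area formula. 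The equality case of Brock--Brothers--Ziemer then forces $\psi$ to coincide with $\psi_t$ for all $t\ge 0$, i.e.\ $\psi$ is symmetric about some line perpendicular to $e$. Running this in two linearly independent directions and invoking the classical two-line symmetry lemma (parallel axes being excluded by the decay of $\psi$) yields radial symmetry of $\psi$, and hence of $\omega_0=-\Delta\psi+2\Omega$, about some centre. The centre is pinned to the origin by the $\tfrac{\Omega}{2}|x|^2$ term whenever $\Omega\ne 0$, and is free (radial symmetry up to translation) when $\Omega=0$.

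The principal obstacle I anticipate is the equality-case argument: Brock--Brothers--Ziemer requires $|\{\nabla\psi=0\}\cap\{c_1<\psi<c_2\}|=0$, which may fail on ``flat'' pieces where $\omega_0\equiv 2\Omega$ (i.e.\ $\Delta\psi\equiv 0$). In the sign-changing regime these flat regions are nontrivial to rule out, and one must either exploit \eqref{1-5} at the boundary levels of such flat pieces to force symmetry by a unique-continuation argument, or approximate via regular-value truncations and pass to the limit. This delicate treatment of critical and flat levels, absent in the positive-vorticity Riesz-energy approach of \cite{Gom2021MR4312192}, is the technical heart of the sign-changing theorem.
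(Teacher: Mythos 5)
Your skeleton matches the paper's: pass to the stream function $u=\mathcal{N}\ast\omega_0+\frac{\Omega}{2}|x|^2$, use the angular-velocity condition to make $u$ superharmonic (the subharmonic case following by sign reversal), apply continuous Steiner symmetrization in every direction, and deduce symmetry from stationarity of the Dirichlet energy. But the central step is not carried out, and the mechanism you describe for it would fail. Stationarity of $t\mapsto\int|\nabla u^t|^2$ does not follow from ``$\tfrac12|\nabla u|^2$ being determined by level-value data through a primitive of $g$'': $|\nabla u|$ is not a function of $u$ on level sets unless $u$ is already symmetric, and the relation $\omega_0=g(u)$ holds only locally, with different branches on different components of the regular level sets, so no Cavalieri-type identity applies to $\int G(u)\,\d x$ globally. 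The correct mechanism is to test $-\Delta u=\omega_0-2\Omega$ against $(u-c)_+^t-(u-c)_+$ and to prove $\int\omega_0\,[(u-c)_+^t-(u-c)_+]\,\d x=o(t)$; precisely because no global vorticity strength function exists, the paper approximates $\omega_0$ by step functions that are constant between regular level sets of $\omega_0$ and proves, region by region, that $\int_{D_j}[(u-c)_+^t-(u-c)_+]\,\d x=o(t)$ (Lemmas \ref{key2} and \ref{key3}), using only that $u$ is constant on each boundary component. Moreover $u$ is unbounded below, so the symmetrization can only be applied to truncations $(u-c)_+$; this generates the extra term $\int_{U^t}|\nabla u|^2-\int_{U}|\nabla u|^2$, which must be made $\le\varepsilon t$ by pushing the truncation level to $-\infty$ and exploiting the asymptotic circularity (and, for $\Omega<0$, convexity) of the far level sets $\{u=\gamma_k\}$, together with the monotonicity transfer of Lemma \ref{key1}. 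None of this appears in your outline, and it is where the proof actually lives.

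The conclusion you draw from stationarity is also too strong. Brock's criterion (Proposition \ref{pro2}) yields \emph{local} symmetry in the direction $e$ --- a decomposition of $\{0<u<\sup u\}$ into annuli on which $u$ is radial plus a critical set --- not $u^t=u$ and not reflection symmetry about a line, so the ``two-line symmetry lemma'' has nothing to act on. The Brothers--Ziemer equality case you invoke instead requires exactly the non-degeneracy of $\{\nabla u=0\}$ that you concede may fail on flat pieces. The paper sidesteps this entirely: local symmetry in every direction is upgraded to radial symmetry by a \emph{second} use of superharmonicity, via the Hopf lemma, which forces every annulus in the local-symmetry decomposition to extend to the boundary of the super-level set and hence to be concentric (Lemma \ref{key0}). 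So the difficulty you identify as the ``technical heart'' is not the one that must be overcome, and the steps that are the technical heart are missing.
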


The main idea of the proofs is outlined as follows:  establishing the rigidity properties of steady 2D Euler flows is equivalent to demonstrating the radial symmetry of the stream function $\mathcal{N} \ast \omega_0 + \frac{\Omega}{2}|x|^2$. Furthermore, the condition on the angular velocity guarantees that the stream function is either superharmonic or subharmonic. Consequently, the task reduces to establishing the local symmetry of the stream function. To this end, we apply the continuous Steiner symmetrization, developed by Brock \cite{Bro1995MR1330619, Broc2000MR1758811, Bro2016MR3509374}, to derive the desired local symmetry properties. This idea has been employed in our recent work \cite{Fan2024}, where the rigidity properties of stationary and uniformly rotating patch and smooth solutions in the unit disk are established. Here the main challenges arise from the unboundedness of the domain, and the fact that the stream function satisfies the elliptic equation only piecewise. These difficulties are addressed by employing a carefully matched truncation argument and developing piecewise estimation techniques.

We would like to finish the introduction with some remarks on the {\it stationary wild weak solutions} of \eqref{1-1}, that is solutions with sufficiently low regularity. In a series of papers \cite{DeLellis2009,DeLellis2012,DeLellis2013}, De Lellis and Sz\'ekelyhidi developed the convex integration theory and the $h$-principle to demonstrate the non-uniqueness of wild weak solutions of the incompressible Euler equation. This framework was subsequently extended to including  the steady 2D incompressible Euler equation by Choffrut and Sz\'ekelyhidi
in \cite{Cho2014MR3505175},  where they constructed infintely many weak stationary solutions $\mathbf{v} \in L^\infty(\mathbb{T}^2)$ close to a given smooth steady Euler flow in the sense of the $H^{-1}$ distance.

Theorem \ref{thm} precludes the use of convex integration techniques to construct non-radial stationary rough vortex patches within the framework of the patch dynamics \eqref{E:1-2}. This result reveals that stationary structures of \eqref{E:1-2} are exceedingly rare near the Rankine vortex, even when irregular vortex patches are involved, in the absence of relative equilibria.

On the other hand, the role of vorticity profiles in this context remains unclear. An interesting problem is whether one can relax the smooth assumption in Theorem \ref{thm2}. This question is closely related to the applicability of definition \eqref{1-5} as the definition of stationary/uniformly rotating solutions of the 2D Euler flows. In particular, this equivalence is guaranteed when $\omega_0 \in C^2(\R^2)$ by Sard's theorem.

Another issue is whether the convex integration techniques can be employed to construct non-radial wild weak stationary/uniformly rotating solutions near a radially symmetric smooth vorticity profile. We believe that the methodology in \cite{Cho2014MR3505175} remains applicable in this setting.

The rest of the paper is organized as follows: In Section \ref{s2}, we present several preliminary results that will be employed in the subsequent analysis. Section \ref{s3} addresses the radial symmetry of uniformly rotating multi-patch solutions to \eqref{1-1} and provides the proof of Theorem \ref{thm}. Section \ref{s4} focuses on the radial symmetry of uniformly rotating smooth solutions and concludes with the proof of Theorem \ref{thm2}. In Appendix \ref{s5}, we consider bifurcation of concentric vortex patches then demonstrate that the bounds on the angular velocities in Theorem \ref{thm} are indeed sharp. \\

{\bf Notations:} Here we list notations and conventions  frequently used throughout this paper.
\begin{itemize}
  \item Let $B_r(x)$ and $Q_r(x)$ denote the open and closed discs in $\mathbb{R}^2$ with center $x$ and radius $r>0$, respectively. For convenience, we abbreviate $B_r:=B_r(0)$ and $Q_r:=Q_r(0)$.
  \item The non-negative part of a function $f$ is denoted by $f_+$, defined as $f_+=\max\{f, 0\}$.
  \item For a set $D$, we use $1_D$ to denote its indicator function.
  \item The terms ``infimum" and ``supremum" mentioned in this paper are understand in the sense of the essential infimum and essential supremum, respectively.
  \item $\mathcal{L}^N$ denotes $N$-dimensional Lebesgue measure.
  \item By $\mathcal{M}(\R^2)$ we denote the family of Lebesgue measurable sets in $\R^2$ with finite measure.
  \item For a function $u:\R^2\to \R$, let $\{u>a\}$ and $\{b\ge u>a\}$ denote the sets $\left\{x\in \R^2: u(x)>a \right\}$ and $\left\{x\in \R^2: b\ge u(x)>a \right\}$, respectively, ($a, b\in \R$, $a<b$).
  \item Let $\mathcal{S}(\R^2)$ be the set of real-valued measurable functions $u$ that satisfy
  \begin{equation*}
  	\LL^2(\{u>c\})<+\infty,\ \ \forall\, c>\inf u.
  \end{equation*}

\item For a Jordan curve $\Gamma$, let $\operatorname{int}(\Gamma)$ denote its interior, defined as the bounded connected component of $\mathbb{R}^2$ separated by $\Gamma$. By the Jordan curve theorem, $\operatorname{int}(\Gamma)$ is open and simply connected.
 \item  $C$ denotes a constant that is independent of the relevant parameters under consideration. The explicit value of $C$ may vary in different occurrences.
\end{itemize}

\section{Preliminaries}\label{s2}

We present some preliminary results involved in the proofs of the main results in this beginning section.

\subsection{The continuous Steiner symmetrization}
A key ingredient in our analysis is the continuous Steiner symmetrization \cite{Bro1995MR1330619, Broc2000MR1758811, Bro2016MR3509374, Sol2020MR4175494}. For the reader's convenience, and to make this paper self-contained, we recall its definition following the presentation in \cite{Bro2016MR3509374}. Although the original formulation applies to more general settings, here we slightly simplified the statement to cover our specific case well.

First, recall the definition of classical Steiner symmetrization, cf. \cite{Bro2007MR2569330, Kaw1985MR810619, Lie2001MR1817225}.
\begin{definition}[Steiner symmetrization]
\ \ \
  \begin{itemize}
    \item [(i)]For any set $M\in \M(\R)$, let
    \begin{equation*}
      M^*:=\left(-\frac{1}{2}\LL^1(M),\  \frac{1}{2}\LL^1(M)\right).
    \end{equation*}
    \item [(ii)]Let $M\in \M(\R^2)$. For every $x_2\in \R$, let
    \begin{equation*}
      M(x_2):=\left\{x_1\in \R: (x_1, x_2)\in M \right\}.
    \end{equation*}
    The set
    \begin{equation*}
      M^*:=\left\{ x=(x_1, x_2): x_1\in \left(M(x_2) \right)^*, x_2\in \R\right\}.
    \end{equation*}
    is called the Steiner symmetrization of $M$ (with respect to $x_1$).
    \item [(iii)]If $u\in \s$, then the function
    \begin{equation*}
      u^*(x):=\begin{cases}
                \sup \left\{c>\inf u: x\in \left\{u>c \right\}^*\right\}, & \mbox{if }\  x\in \bigcup_{c>\inf u} \left\{u>c \right\}^*, \\
                \inf u, & \mbox{if }\  x\not\in \bigcup_{c>\inf u} \left\{u>c \right\}^*,
              \end{cases}
    \end{equation*}
   is called the Steiner symmetrization of $u$ (with respect to $x_1$).
  \end{itemize}
\end{definition}

\begin{definition}[Continuous symmetrization of sets in $\M(\R)$]\label{Def2-2}
  A family of set transformations
  \begin{equation*}
    \T_t:\  \M(\R)\to \M(\R),\ \ \  0\le t\le +\infty,
  \end{equation*}
is called a continuous symmetrization on $\R$ if it satisfies the following properties: ($M, E\in \M(\R)$, $0\le s, t\le +\infty$)
\begin{itemize}
  \item [(i)]Equimeasurability property:\, $\LL^1(\T_t(M))=\LL^1(M)$,

    \smallskip
  \item [(ii)]Monotonicity property:\, If $M\subset E$, then $\T_t(M)\subset \T_t(E)$,

    \smallskip
  \item [(iii)]Semigroup property:\, $\T_t(\T_s(M))=\T_{s+t}(M)$,

    \smallskip
  \item [(iv)]Interval property:\, If $M$ is an interval $[x-R,\ x+R]$, ($x\in \R$, $R>0$), then $\T_t(M):=[xe^{-t}-R,\ xe^{-t}+R]$,

    \smallskip
  \item [(v)]Open/compact set property: If $M$ is open/compact, then $\T_t(M)$ is open/compact.
\end{itemize}
\end{definition}
\noindent For the construction of the family $\T_t$, $0 \le t \le +\infty$, we refer the reader to \cite[Theorem 2.1]{Broc2000MR1758811}.

Continuous Steiner symmetrization is to establishing a homotopy of classical Steiner symmetrization.
\begin{definition}[Continuous Steiner symmetrization (CStS)]\label{csts}
  \ \ \
  \begin{itemize}
    \item [(i)]Let $M\in \M(\R^2)$. The family of sets
    \begin{equation*}
      \T_t(M):=\left\{x=(x_1, x_2): x_1\in \T_t(M(x_2)), x_2\in \R \right\},\ \ \ 0\le t\le +\infty,
    \end{equation*}
    is called the continuous Steiner symmetrization (CStS) of $M$ (with respect to $x_1$).
    \item [(ii)]Let $u\in \s$. The family of functions $\T_t(u)$, $0\le t \le +\infty$, defined by
    \begin{equation}\label{2-0}
      \T_t(u)(x):=\begin{cases}
                \sup \left\{c>\inf u: x\in \T_t\left(\left\{u>c \right\}\right)\right\}, & \mbox{if }\ x\in \bigcup_{c>\inf u} \T_t\left(\left\{u>c \right\}\right), \\
                \inf u, & \mbox{if }\ x\not\in \bigcup_{c>\inf u} \T_t\left(\left\{u>c \right\}\right),
              \end{cases}
    \end{equation}
    is called CStS of $u$ (with respect to $x_1$).
  \end{itemize}
\end{definition}

For convenience, we will henceforth denote $M^t$ and $u^t$ as $\T_t(M)$ and $\T_t(u)$, respectively, for $t \in [0, +\infty]$. Note that if $u, v \in \mathcal{S}(\R^2)$ are continuous, nonnegative functions with compact support, and their supports satisfy $\text{supp}(u) \cap \text{supp}(v) = \varnothing$, then the property $(u+v)^t = u^t + v^t$ holds for any sufficiently small $t>0$.

\begin{remark}[Remark 2.5, \cite{Broc2000MR1758811}]
  Formula \eqref{2-0} is equivalent to the following relations:
  \begin{equation*}
    \begin{split}
       \{u^t>c\} & =\{u>c\}^t,\ \ \ \forall\,c>\inf u, \\
        \{u^t=\inf u\} & =\R^2\backslash\bigcup_{c>\inf u}\{u>c\}^t, \\
        \{u=+\infty\} & =\bigcap_{c>\inf u}\{u>c\}^t.
    \end{split}
  \end{equation*}
\end{remark}

We summarize below basic properties of CStS, established by Brock in \cite{Bro1995MR1330619, Broc2000MR1758811}.

\begin{proposition}\label{pro0}
  Let $M\in \M(\R^2)$, $u,v\in \s,\, t\in [0,+\infty]$. Then
  \begin{itemize}
    \item [(1)]Equimeasurability:
    \begin{equation*}
      \LL^2(M)=\LL^2(M^t)\ \ \ \text{and}\ \ \ \left\{u^t>c \right\}=\left\{u>c \right\}^t,\  \forall\, c>\inf u.
    \end{equation*}

    \smallskip
    \item [(2)]Monotonicity: If $u\le v$, then $u^t\le v^t$.

    \smallskip
    \item [(3)]Commutativity: If $\phi: \R \to \R$ is nondecreasing, then
    \begin{equation*}
      \phi(u^t)=[\phi(u)]^t.
    \end{equation*}

    \smallskip
    \item [(4)]Homotopy:
    \begin{equation*}
      M^0=M,\ \ \  u^0=u,\ \ \ M^\infty =M^*,\ \ \ u^\infty=u^*.
    \end{equation*}
Furthermore, from the construction of the CStS it follows that, if $M=M^*$ or $u=u^*$, then $M^t=M$, respectively, $u=u^t$ for all $t\in [0, +\infty]$.

    \smallskip
        \item [(5)]Cavalieri's pinciple: If $F$ is continuous and if $F(u)\in L^1(\R^2)$ then
        \begin{equation*}
          \int_{\R^2} F(u)\,\d x= \int_{\R^2} F(u^t)\,\d x.
        \end{equation*}

    \smallskip
    \item [(6)]Continuity in $L^p$: If $t_n\to t $ as $n\to +\infty$ and $u\in L^p(\R^2)$ for some $p\in [1, +\infty)$, then
    \begin{equation*}
      \lim_{n\to +\infty}\|u^{t_n}-u^t\|_p=0.
    \end{equation*}

    \smallskip
        \item [(7)]Nonexpansivity in $L^p$: If $u, v\in L^p(\R^2)$ for some $p\in [1, +\infty)$, then
        \begin{equation*}
          \|u^{t}-v^t\|_p\le \|u-v\|_p.
        \end{equation*}

          \smallskip
        \item [(8)]Hardy-Littlewood inequality: If $u, v\in L^2(\R^2)$ then
        \begin{equation*}
           \int_{\R^2} u^t v^t\,\d x\ge \int_{\R^2} u v\,\d x.
        \end{equation*}

         \smallskip
        \item [(9)]If $u$ is Lipschitz continuous with Lipschitz constant $L$, then $u^t$ is Lipschitz continuous, too, with Lipschitz constant less than or equal to $L$.

         \smallskip
        \item [(10)] If $\text{supp}\, (u)\subset B_R$ for some $R>0$, then we also have $\text{supp}\, (u^t)\subset B_R$. If, in addition, $u$ is Lipschitz continuous with Lipschitz constant $L$, then we have
            \begin{equation*}
              |u^t(x)-u(x)|\le LR\, t,\ \ \ \forall\, x\in B_R.
            \end{equation*}
           Furthermore, there holds
           \begin{equation*}
             \int_{B_R}G(|\nabla u^t|)\,\d x \le  \int_{B_R}G(|\nabla u|)\,\d x,
           \end{equation*}
           for every convex function $G: [0, +\infty) \to [0, +\infty)$ with $G(0)=0$.
  \end{itemize}
\end{proposition}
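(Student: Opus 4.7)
The plan is to derive all ten properties from the defining axioms of the one-dimensional family $\T_t$ in Definition \ref{Def2-2} together with the layer-cake representation of $\T_t(u)$ in Definition \ref{csts}. Following Brock \cite{Bro1995MR1330619, Broc2000MR1758811}, I would first verify each property for indicators of measurable sets, then extend to nonnegative simple functions, and finally pass to a general $u\in \mathcal{S}(\R^2)$ via truncation $u_k := \min\{(u-\inf u)_+, k\}$ and an approximation argument. A recurring reduction is that, by Fubini, every two-dimensional claim splits into a one-dimensional claim about $\T_t$ acting on the slices $M(x_2)$, for which the interval/semigroup/equimeasurability axioms can be invoked directly.

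First I would handle the measure-theoretic items (1)--(5). Equimeasurability in the plane follows by integrating the one-dimensional equimeasurability $\LL^1(\T_t(M(x_2))) = \LL^1(M(x_2))$ over $x_2\in \R$, and the level-set identity $\{u^t>c\} = \{u>c\}^t$ is built into \eqref{2-0}. Monotonicity (2) is immediate from Definition \ref{Def2-2}(ii): $u \le v$ forces $\{u>c\}\subset \{v>c\}$, hence $\{u>c\}^t \subset \{v>c\}^t$. Commutativity (3) follows because for a nondecreasing $\phi$ the level sets $\{\phi(u)>c\}$ agree a.e. with $\{u>c'\}$ for $c' = \inf\{s:\phi(s)>c\}$, so the two sides of (3) share the same family of level sets. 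Homotopy (4) follows from $\T_0 = \mathrm{id}$ on intervals (Definition \ref{Def2-2}(iv)) and the limit $\T_\infty([x-R,x+R]) = [-R,R]$, coupled with the equimeasurability representation; the invariance statement under $M=M^*$ is a combination of the semigroup and interval properties. Cavalieri's principle (5) is the standard layer-cake identity applied through (1).

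The analytical items (6)--(8) are more delicate but still classical. For nonexpansivity (7), I would start from the identity $\|u-v\|_p^p = p\int_{\R}|c|^{p-1}\LL^2(\{u>c\}\triangle\{v>c\})\,dc$ and exploit that $\LL^2(A^t \triangle B^t) \le \LL^2(A\triangle B)$ for $A,B\in\M(\R^2)$, which in turn reduces via Fubini to the one-dimensional nonexpansivity of $\T_t$ on sets (derivable from the interval and semigroup properties). Continuity in $L^p$ (6) is then established by first verifying it for indicators of intervals via the explicit interval property, extending by density to indicators of arbitrary measurable sets, and finally to $u\in L^p\cap \mathcal{S}(\R^2)$ by truncation and (7). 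The Hardy--Littlewood inequality (8) follows from the two-variable layer-cake decomposition
\[
\int_{\R^2} u^t v^t\,\d x = \int_0^\infty\!\!\int_0^\infty \LL^2(\{u^t>s\}\cap\{v^t>r\})\,ds\,dr
\]
combined with the set inequality $\LL^2(A^t\cap B^t) \ge \LL^2(A\cap B)$, which again reduces to a one-dimensional statement about how shifting intervals toward the origin can only increase overlap.

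The main obstacle is the pair (9)--(10), the Lipschitz preservation and the Pólya--Szegő-type gradient inequality. For the quantitative estimate in (10), I would track how each connected component of the slice $\{u>c\}(x_2)$ translates under $\T_t$: by Definition \ref{Def2-2}(iv) such an interval moves at speed at most $R$ toward $0$, so each level $\{u^t=c\}$ moves at speed at most $R$, giving $|u^t(x) - u(x)| \le LRt$ on $B_R$. Combining this quantitative bound with the semigroup property and taking directional difference quotients then forces the Lipschitz constant of $u^t$ to be bounded by that of $u$, proving (9). The convex gradient inequality is the deepest point; here I would approximate $u$ by piecewise affine functions whose level sets are polygons with finitely many vertices, verify the inequality directly in that case through the interval property and a one-dimensional rearrangement estimate, and pass to the limit using the $L^p$-continuity from (6) together with the lower semicontinuity of $v \mapsto \int G(|\nabla v|)$ under $W^{1,1}_{\mathrm{loc}}$-type convergence. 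This polygonal approximation step is where Brock's argument is most intricate, and it is where I expect the bulk of the effort to lie.
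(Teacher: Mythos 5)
The paper does not actually prove Proposition \ref{pro0}: it is imported wholesale from Brock \cite{Bro1995MR1330619, Broc2000MR1758811}, with a pointer to \cite[Theorem 2.1]{Broc2000MR1758811} for the underlying construction. So there is no internal argument to compare against; what you have written is a reconstruction of Brock's own development, and in broad outline it matches it --- the slice-wise reduction via Fubini, the passage from sets to simple functions to general $u\in\mathcal{S}(\R^2)$, the set-theoretic versions of (7)--(8), and the identification of the P\'olya--Szeg\H{o}-type inequality in (10) as the technical core are all the right moves.

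Two substantive issues. First, your starting identity for (7), $\|u-v\|_p^p = p\int_{\R}|c|^{p-1}\LL^2(\{u>c\}\triangle\{v>c\})\,\d c$, is false for $p>1$: take $u=2\cdot 1_A$, $v=1_A$, $p=2$; the left side is $\LL^2(A)$ while the right side is $2\int_1^2 c\,\LL^2(A)\,\d c=3\LL^2(A)$. The layer-cake decomposition of $|u-v|^p$ does not localize to a single level $c$ because $u$ and $v$ cross different levels at the same point. The correct reduction (Crowe--Zweibel--Rosenbloom, as used by Brock) is to prove $\int J(u-v)\,\d x \ge \int J(u^t-v^t)\,\d x$ for convex $J$ with $J(0)=0$, reduce by convexity to $J(s)=(s-a)_+$, and then invoke the two-level-set inequality $\LL^2\bigl(\{u^t>c\}\setminus\{v^t>d\}\bigr)\le \LL^2\bigl(\{u>c\}\setminus\{v>d\}\bigr)$, which follows from equimeasurability together with the set form of the Hardy--Littlewood property (your item (8)); so (7) should be derived \emph{after} (8), not independently of it. Second, several of your reductions treat $\T_t$ on a general slice $M(x_2)$ as if each connected component were a single interval drifting at speed $|x|\le R$; but Definition \ref{Def2-2} only prescribes $\T_t$ on intervals, and its action on general measurable sets (in particular the merging of components, which is exactly what makes the speed bound and the gradient inequality delicate) is part of the construction in \cite[Theorem 2.1]{Broc2000MR1758811} rather than a consequence of the listed axioms. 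Your sketch of (9)--(10) correctly locates the difficulty in the polygonal approximation for the convex gradient inequality, but the ``difference quotient'' derivation of (9) from the $|u^t-u|\le LRt$ bound does not work as stated ($u^t$ being uniformly close to $u$ says nothing about its Lipschitz constant); Brock obtains (9) from the level-set structure directly. None of this undermines the statement --- it is a quoted theorem --- but as a self-contained proof the proposal has these gaps.
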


\subsection{Local symmmetry}
Following Brock \cite{Broc2000MR1758811}, we introduce a local version of symmetry for a function $u\in \s$.
\begin{definition}[Local symmetry in a certain direction]\label{def2-1}
Let $u\in \s$ be a function with compact support. Suppose the set $$\{x: 0< u(x) < \sup_{\R^2}u \}$$ is open, and that $u$ is continuously differentiable on this set. The function $u$ is said to be \emph{locally symmetric in the direction $x_1$} if  the following condition holds:

\noindent If $y=(y_1, y_2)\in \R^2$ satisfies
  \begin{equation*}
    0<u(y)<\sup_{\R^2} u,\ \ \ \partial_1 u(y)>0,
  \end{equation*}
  and $\tilde{ y}$ is the (unique) point satisfying
  \begin{equation*}
    \tilde{y}=(\tilde{y}_1, y_2),\ \ \ \tilde{y}_1>y_1,\ \ \ u(y)=u(\tilde{y})< u(s, y_2),\ \ \forall\, s\in (y_1,\tilde{y}_1),
  \end{equation*}
  then
  \begin{equation*}
  \partial_2 u(y)=\partial_2u(\tilde{y}),\ \ \ \partial_1u(y)=-\partial_1 u (\tilde{y}).
  \end{equation*}
\end{definition}
\vspace{0.3 cm}
Suppose that for arbitrary rotations $x\mapsto y=(y_1, y_2)$ of the coordinate system, $u$ is locally symmetric in the direction $y_1$. Then $u$ is said to be \emph{locally symmetric}. In other words, a function $u$ is said to be locally symmetric if it is locally symmetric in \emph{every} direction.

Although locally symmetric functions are not globally radial, they possess strong symmetric characteristics. Roughly speaking, it is radially symmetric and radially decreasing in some annuli (probably infinitely many) and flat elsewhere.
\begin{proposition}[ \cite{Broc2000MR1758811}, Theorem 6.1]\label{pro1}
Let $u \in \s$ be a locally symmetric function. Set
\[
V:=\{x \in \R^2: 0<u(x) <\sup_{\R^2} u \}.
\]
Then we have the following decomposition:
    \begin{itemize}
    \item [(1)]$\displaystyle V=\bigcup_{k\in K} A_k \cup \{x\in V: \nabla u(x)=0\}$, \text{where}
    \begin{equation*}
      A_k=B_{R_k}(z_k)\backslash {Q_{r_k}(z_k)},\ \ \ z_k\in \R^2,\ \ \ 0\le r_k<R_k;
    \end{equation*}
     \item[(2)]$K$ is a countable set;
        \smallskip
    \item [(3)] the sets $A_k$ are pairwise disjoint;
    \smallskip
     \item [(4)]$u(x)=U_k(|x-z_k|)$, $x\in A_k$, where $U_k\in C^1([r_k, R_k])$;
         \smallskip
    \item [(5)]$U'_k(r)<0$ for $r\in (r_k, R_k)$;
        \smallskip
    \item [(6)]$u(x)\ge U_k(r_k),\ \forall\,x\in Q_{r_k}(z_k)$, $k\in K$.
  \end{itemize}
\end{proposition}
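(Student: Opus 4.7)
The plan is to extract a global annular decomposition of $V$ from the local reflection data provided by Definition \ref{def2-1}. At any point $y \in V$ with $\nabla u(y) \neq 0$, local symmetry in every rotated coordinate direction will force $u$ to be locally a radial profile about some center $z(y)$; continuity of $\nabla u$ will then show that these centers are locally constant, so that the connected components of the regular set $\{x \in V : \nabla u(x)\neq 0\}$ are open annuli centered at well-defined points.

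I would begin at a single regular point. Fix $y \in V$ with $\nabla u(y) \neq 0$. For each rotated direction $e$ with $\partial_e u(y)>0$, the compact support of $u$ together with the strict positivity of the directional derivative guarantees the existence of a first return $\tilde y(e)$ on the ray from $y$ in direction $e$, at which local symmetry yields $u(\tilde y(e))=u(y)$ and a gradient-reflection identity across the perpendicular to $e$ through the midpoint of $[y,\tilde y(e)]$. Comparing the gradient identities for two non-parallel admissible directions forces the two perpendicular bisectors to meet in a single point $z(y)$. Since $\nabla u(y)$ and $\nabla u(\tilde y(e))$ are normal to the level curve $\{u = u(y)\}$ at the respective endpoints, the normal lines at $y$ and at every $\tilde y(e)$ pass through $z(y)$, which shows that the curve is a circular arc centered at $z(y)$. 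Propagating the argument to nearby level curves via the continuity of $\nabla u$ then produces a neighborhood of $y$ on which $u(x) = U(|x - z(y)|)$ with $U \in C^1$ and $U'<0$.

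Next I would globalize. By continuity, $z(\cdot)$ is locally constant on the regular set, and hence constant on each connected component $A$. On such a component with common center $z$, every circle $\partial B_\rho(z)$ that meets $A$ must lie entirely in $A$: otherwise a $C^1$ level curve would terminate at an interior regular point, contradicting nonvanishing gradient. Thus the radii attained by points of $A$ form an interval, so $A$ has the form $B_{R}(z)\setminus Q_r(z)$ with $u(x)=U(|x-z|)$, $U \in C^1([r,R])$, $U'<0$. Enumerating these annuli as $A_k$ yields items (1), (4), (5). Disjointness (3) is immediate, since on the overlap of two annuli with different centers $u$ would be simultaneously radial about two distinct points, forcing $\nabla u = 0$; two annuli with the same center coincide by maximality. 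Countability (2) follows because a pairwise disjoint family of nonempty open subsets of $\R^2$ is at most countable. For (6), continuity of $u$ on $\overline{A_k}$ gives $u \equiv U_k(r_k)$ on $\partial Q_{r_k}(z_k)$; if there were $x_0 \in Q_{r_k}(z_k)$ with $u(x_0)<U_k(r_k)$, then applying the same local-symmetry analysis inside the inner disk would generate a regular component adjoining $A_k$ from inside and enlarging it, contradicting its maximality.

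The main obstacle lies in the initial local analysis: extracting the single center $z(y)$ from the local reflection data. The conditions in Definition \ref{def2-1} are conditional (``if such $\tilde y$ exists, then $\ldots$''), so one must first verify existence of $\tilde y(e)$ for a rich enough family of admissible directions $e$, and then show that the resulting reflection axes are mutually compatible. The latter amounts to a local rigidity result: a $C^1$ curve admitting reflective symmetry with matching normals across a one-parameter family of chords through a fixed point must be a circular arc. This rigidity must be derived carefully from the pointwise gradient identities in Definition \ref{def2-1} rather than from the level-set reflection alone, since the latter would not suffice to upgrade the radial structure to a $C^1$ profile $U_k$ with the strict monotonicity in (5).
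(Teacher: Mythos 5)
The paper does not prove this proposition at all: it is imported verbatim, with citation, as Theorem 6.1 of Brock's paper \cite{Broc2000MR1758811}, so the only meaningful comparison is with Brock's original argument, which occupies a substantial portion of that paper and leans on the full continuous-symmetrization machinery (not just the pointwise reflection identities of Definition \ref{def2-1}). Your outline reconstructs the right overall shape — local radial structure at regular points, local constancy of the center, maximal annuli, countability and disjointness — and the globalization steps (openness and closedness of the set of attained radii on a circle, disjointness via double radiality forcing $\nabla u=0$) are essentially sound once the local step is granted.

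The genuine gap is exactly the step you flag and then do not close. For a fixed regular point $y$ and admissible direction $e$, Definition \ref{def2-1} gives one matching point $\tilde{y}(e)$ and the identities $\partial_2 u(y)=\partial_2 u(\tilde{y}(e))$, $\partial_1 u(y)=-\partial_1 u(\tilde{y}(e))$. Two perpendicular bisectors of non-parallel chords always intersect, so "the two bisectors meet at $z(y)$" has no content; what is needed is that \emph{all} bisectors, over all admissible $e$, are concurrent, which is equivalent to every $\tilde{y}(e)$ lying on one circle centered at $z(y)$ through $y$ — and that is precisely the rigidity to be proved, not a consequence of the two identities at a single pair. Likewise, the normal line at $y$ passing through $z(y)$ does not follow from the reflection data, and the points $\tilde{y}(e)$ do not sweep out a neighborhood of $y$ on its level curve, so the "all normals concur" criterion cannot be applied near $y$. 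Two further unaddressed points: nothing in your local analysis rules out a radially \emph{increasing} profile (the reflection identities are equally consistent with $U_k'>0$; excluding it requires global information about the super-level sets, e.g.\ that their components must be disks of finite measure), so item (5) is unproved; and your argument for item (6) is not a maximality contradiction — the correct contradiction is topological, namely that $u(x_0)<U_k(r_k)$ inside $Q_{r_k}(z_k)$ would force a non-simply-connected component of a super-level set. Since the paper itself simply invokes Brock's theorem, citing it is the appropriate course rather than attempting a from-scratch derivation from Definition \ref{def2-1} alone.
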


It can be seen that if $u\in C^1(\overline{B_R})$ ($u\not \equiv 0$) for some $R>0$ is locally symmetric when extended by zero outside $B_R$ and viewed as a function on $\mathbb{R}^2$, then the super-level sets $\{u>t\}$ $(t\ge 0)$ are countable unions of mutually disjoint discs, and $|\nabla u|=\text{const.}$ on the boundary of each of these discs.

The following result demonstrates that local symmetry, when combined with superharmonicity, leads to global symmetry.

\begin{lemma}\label{key0}
  Let $R > 0$, let $a\in \R^2$, and let $u \in C^1(\overline{B_R(a)})$ satisfy $u > 0$ in $B_R(a)$ and $u = 0$ on $\partial B_R(a)$. Suppose that $u$ is locally symmetric. Suppose also that $u$ is a weakly superharmonic function in the sense that
    \begin{equation*}
\int_{B_R(a)} \nabla u \cdot \nabla \varphi \, \d x \geq 0
\end{equation*}
    for all non-negative functions $\varphi \in C_c^\infty(B_R(a))$. Then $u$ is a radial function with respect to $a$, namely, $u=u(|x-a|)$. Moreover, there exists $r_0\in[0, R)$ such that $u$ is constant in $B_r(a)$, and for $r=|x-a|\in (r_0, R)$, we have $\frac{\d u}{\d r}<0$.
    \end{lemma}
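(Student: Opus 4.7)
The plan is to combine the structural decomposition of locally symmetric functions provided by Proposition~\ref{pro1} with a rigidity argument that uses weak superharmonicity to collapse the annular decomposition to a single concentric annulus centered at $a$. First, I would extend $u$ by zero to all of $\R^2$; the extension lies in $\mathcal{S}(\R^2)$ with compact support in $\overline{B_R(a)}$, so Proposition~\ref{pro1} applies and yields the decomposition $V=\bigcup_{k\in K}A_k\cup\{\nabla u=0\}$ of $V=\{0<u<\sup u\}$ into pairwise disjoint annuli $A_k=B_{R_k}(z_k)\setminus Q_{r_k}(z_k)$ together with a critical set.

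Second, I would identify the outermost annulus as concentric with $a$. By Hopf's boundary point lemma (applicable to $C^1$ weakly superharmonic functions that vanish on $\partial B_R(a)$ while being positive inside), one has $|\nabla u|>0$ in an inward neighborhood $\{R-\varepsilon<|x-a|<R\}$ of the boundary. Being connected and disjoint from $\{\nabla u=0\}$, this neighborhood must lie in a single annulus $A_{k_0}$. A continuity argument using $u\to 0$ as $x\to\partial B_R(a)$ together with the strict monotonicity of $U_{k_0}$ on $(r_{k_0},R_{k_0})$ then forces $z_{k_0}=a$, $R_{k_0}=R$, and $U_{k_0}(R)=0$; thus $u(x)=U_{k_0}(|x-a|)$ on $B_R(a)\setminus Q_{r_{k_0}}(a)$.

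Third, I would prove $u\equiv c_0:=U_{k_0}(r_{k_0})$ on $Q_{r_{k_0}}(a)$. Assume for contradiction that $\{u>c_0\}\neq\varnothing$. By the remark following Proposition~\ref{pro1}, this super-level set is a countable union of disjoint open discs $D_i=B_{\rho_i}(\zeta_i)\subset B_{r_{k_0}}(a)$. Combining $u\geq c_0$ on $Q_{r_{k_0}}(a)$ (property~(6) of Proposition~\ref{pro1}) with $u\le c_0$ off $\{u>c_0\}$ yields $u\equiv c_0$ on $B_{r_{k_0}}(a)\setminus\bigcup_i D_i$. The $C^1$ regularity of $u$, together with the fact that $u$ is constant in a neighborhood of $\partial D_i$ from outside, then forces $\nabla u=0$ on $\partial D_i$ (at any tangency points with $\partial B_{r_{k_0}}(a)$ or another $\partial D_j$, continuity of $\nabla u$ yields the same conclusion). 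Consequently $w_i:=(u-c_0)\mathbf{1}_{\overline{D_i}}$ belongs to $C^1(\R^2)$, is nonnegative, and has compact support in $\overline{D_i}$.

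Finally, a cutoff argument verifies that $w_i$ is weakly superharmonic on all of $\R^2$: for $\varphi\in C_c^\infty(\R^2)$ with $\varphi\ge 0$, approximating $\varphi$ by $\varphi\eta_\epsilon$ with $\eta_\epsilon\in C_c^\infty(D_i)$ and using $\nabla u=0$ on $\partial D_i$ to control the boundary error yields $\int\nabla w_i\cdot\nabla\varphi\,\d x\ge 0$. Since $w_i$ is $C^1$ and compactly supported, testing $-\Delta w_i\geq 0$ against a smooth cutoff equal to $1$ on $\operatorname{supp} w_i$ produces $\int_{\R^2}(-\Delta w_i)\,\d x=0$; combined with $-\Delta w_i\ge 0$ as a Radon measure, this forces $-\Delta w_i=0$, so $w_i$ is harmonic. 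A nonnegative, compactly supported harmonic function on $\R^2$ vanishes identically by the maximum principle, contradicting $u>c_0$ in $D_i$. Hence $u\equiv c_0$ on $Q_{r_{k_0}}(a)$, and taking $r_0:=r_{k_0}$ together with property~(5) of Proposition~\ref{pro1} delivers radial symmetry about $a$, constancy on $B_{r_0}(a)$, and $\d u/\d r<0$ on $(r_0,R)$. The main obstacle I anticipate is the third step: ensuring $\nabla u=0$ at every point of $\partial D_i$, particularly at tangential contacts with $\partial B_{r_{k_0}}(a)$ or with other $\partial D_j$, and using this vanishing cleanly to justify the cutoff-error estimate that upgrades $u$'s local weak superharmonicity to global weak superharmonicity of $w_i$.
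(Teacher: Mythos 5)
Your proof is correct, and it reaches the same conclusion by a route that overlaps with the paper's only in its first half. The paper's argument is much shorter: for \emph{every} annulus $A_k=B_{R_k}(z_k)\setminus Q_{r_k}(z_k)$ in the decomposition of Proposition~\ref{pro1} it invokes the identities $U_k'(r_k)=0$ and $U_k'(R_k)=0$ whenever $R_k<R$ (citing Remark~2.2 of Brock--Tak\'a\v{c}), and then plays these against the Hopf boundary point lemma for the weakly superharmonic $u$ on $B_{R_k}(z_k)$: this forces $R_k=R$ and $z_k=a$ for every $k$, so disjointness leaves a single annulus, and the core $B_{r_k}(a)$ is handled by the ``similar argument'' of matching the interior Hopf derivative at $\partial B_{r_k}(a)$ against $U_k'(r_k)=0$ and the $C^1$ regularity of $u$. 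You instead identify only the outermost annulus (Hopf at $\partial B_R(a)$ plus a connectedness argument locating a full collar inside one $A_{k_0}$ -- essentially the same mechanism), and then treat the core in a genuinely different way: you decompose $\{u>c_0\}$ into disjoint discs via the remark following Proposition~\ref{pro1}, observe that each boundary point interior to $B_{r_{k_0}}(a)$ is a local minimum of $u$ (so $\nabla u=0$ there, with the possible single tangency point on $\partial B_{r_{k_0}}(a)$ recovered by density and continuity of $\nabla u$), and conclude that each disc would carry a $C^1$, compactly supported, nonnegative, weakly superharmonic bump $w_i$, which has zero total Laplacian mass, is therefore harmonic, and must vanish. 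This is longer and needs the cutoff estimate plus the measure-theoretic step, but it buys independence from the one-sided derivative identities $U_k'(r_k)=U_k'(R_k)=0$ at interior critical circles, replacing them by the softer local-minimum observation; the paper's version is more economical but leans on the cited structural fact from Brock--Tak\'a\v{c}. Both arguments are sound.
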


\begin{proof}
   Let $A_k=B_{R_k}(z_k)\backslash {Q_{r_k}(z_k)}\subset B_R(a)$ denote an annulus in the decomposition specified in Proposition \ref{pro1}. It holds that $u(x)=U_k(|x-z_k|)$, $x\in A_k$. Note that $U_k'(r_k)=0$ and $U_k'(R_k)=0$ if $R_k<R$; see also \cite[Remark 2.2]{Bro2022MR4375744}. We claim that $R_k=R$, and consequently, $u$ is a radially decreasing function in $B_R(a)\backslash Q_{r_k}(a)$. If the assertion would not hold, then $U_k'(R_k)=0$. Notice that $u=c$ on $\partial B_{R_k}(z_k)$ for some constant $c\ge 0$. Since $u$ is weakly superharmonic, by the Hopf lemma, we have that $U_k'(R_k)<0$. This leads to a contradiction. Therefore, we have $R_k=R$, and thus $z_k=a$. In light of $U_k'(r_k)=0$, by a similar argument, we conclude that if $r_k>0$, then $u$ must be constant in $B_{r_k}(a)$. The proof is thus complete.
   \end{proof}

\subsection{A symmetry criterion due to F. Brock}

The following symmetry criterion is due to Brock \cite{Broc2000MR1758811}.
\begin{proposition}[\cite{Broc2000MR1758811}, Theorem 6.2]\label{pro2}
Let $u\in H^1(\R^2)\cap C(\R^2)$ be a nonnegative function with compact support. Suppose that $u$ is continuously differentiable on $V$, where
\begin{equation*}
  V=\left\{x\in \R^2: 0<u(x)<\sup u\right\}.
\end{equation*}
Then, if
  \begin{equation*}
    \lim_{t\to 0}\frac{1}{t}\left(\int_{B_R} |\nabla u|^2\,\d x-\int_{B_R} |\nabla u^t|^2\,\d x\right)=0,
  \end{equation*}
  $u$ is locally symmetric in the direction $x_1$.
\end{proposition}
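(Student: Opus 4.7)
The plan is to extract the equality case of the Polya-Szego-type inequality $\int_{B_R}|\nabla u^t|^2\,dx \le \int_{B_R}|\nabla u|^2\,dx$ of Proposition \ref{pro0}(10) at the infinitesimal level: the hypothesis asserts precisely that the non-increasing function $J(t) := \int_{B_R}|\nabla u^t|^2\,dx$ has vanishing right-derivative at $t=0$, and the task is to show that this forces the pointwise local-symmetry identities of Definition \ref{def2-1}.

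First, I would slice and apply the coarea formula. By Sard's theorem and the $C^1$ regularity of $u$ on $V$, for a.e.\ level $c \in (0,\sup u)$ the curve $\{u=c\}$ is $C^1$ and $\nabla u\ne 0$ on it; for a.e.\ horizontal line $\{x_2=a\}$ the slice $\{x_1 : u(x_1,a)>c\}$ is then a finite disjoint union of open intervals $(a_j(c,a), b_j(c,a))$ with $\partial_1 u(a_j,a)>0>\partial_1 u(b_j,a)$. The interval property in Definition \ref{Def2-2}(iv) implies that, before any neighbouring intervals merge, each pair of endpoints evolves under the CStS as $a_j(t) = m_j e^{-t} - r_j$, $b_j(t) = m_j e^{-t} + r_j$, where $m_j = (a_j+b_j)/2$ and $r_j=(b_j-a_j)/2$. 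Expressing the $|\partial_1 u|^2$ and $|\partial_2 u|^2$ components of $J(0)$ via the coarea formula---the latter picking up the derivatives $\partial_a a_j, \partial_a b_j$ obtained through the implicit function theorem---yields a sum-over-intervals representation of $J(0)$ with a nonnegative density depending only on $\nabla u(a_j,a)$ and $\nabla u(b_j,a)$.

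Second, the technical heart of the argument (this is essentially Brock's computation in \cite{Broc2000MR1758811}) is to differentiate the analogous representation for $J(t)$ at $t=0$, yielding an identity of the schematic form
\[
\lim_{t\to 0^+}\frac{J(0)-J(t)}{t} \;=\; \int_0^{\sup u}\!\!\int_\R\!\sum_j W_j(c,a)\Bigl[\bigl(\partial_1 u(a_j,a)+\partial_1 u(b_j,a)\bigr)^2 + \bigl(\partial_2 u(a_j,a)-\partial_2 u(b_j,a)\bigr)^2\Bigr]\,da\,dc,
\]
with positive weights $W_j>0$. The hypothesis forces the left-hand side to vanish, so for a.e.\ $(c,a,j)$ one obtains $\partial_1 u(a_j,a) = -\partial_1 u(b_j,a)$ and $\partial_2 u(a_j,a) = \partial_2 u(b_j,a)$. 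Continuity of $\nabla u$ on the open set $V$ together with density of the regular configurations upgrades these identities to every $y \in V$ with $\partial_1 u(y)>0$ and its mirror point $\tilde y$---exactly the local-symmetry condition in the direction $x_1$ from Definition \ref{def2-1}.

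The principal obstacle is the derivative computation in the second step: the integer $n(c,a)$ and the combinatorial arrangement of the paired endpoints jump at critical values of $u|_{\{x_2=a\}}$, and neighbouring intervals may merge under the CStS flow at arbitrarily small positive times, so one cannot simply differentiate under the integral. These difficulties are controlled by restricting to regular levels via Sard's theorem, truncating to open subsets of $V$ where $|\nabla u|$ is bounded below, invoking the uniform Lipschitz-type bound in Proposition \ref{pro0}(10) to secure an integrable majorant, and applying Fatou's lemma to pass to the limit $t\to 0^+$ while preserving the per-slice nonnegativity of each summand.
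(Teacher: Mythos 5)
The paper does not prove this proposition at all: it is quoted verbatim as Theorem~6.2 of Brock \cite{Broc2000MR1758811}, so there is no internal argument to compare yours against. What you have written is a reconstruction of Brock's own proof, and at the level of strategy it is faithful to it: the interpretation of the hypothesis as the vanishing of the right-derivative of the Dirichlet energy along the CStS flow, the slicing by levels and horizontal lines, the endpoint dynamics $a_j(t)=m_je^{-t}-r_j$, $b_j(t)=m_je^{-t}+r_j$ coming from the interval property, and the identification of the two vanishing conditions $\partial_1u(a_j)+\partial_1u(b_j)=0$, $\partial_2u(a_j)-\partial_2u(b_j)=0$ with Definition \ref{def2-1} are all exactly the ingredients of Brock's argument.

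That said, as a standalone proof your proposal has a genuine gap precisely where you locate the ``technical heart'': the displayed identity for $\lim_{t\to0^+}(J(0)-J(t))/t$ as a positively weighted integral of those two squares is not derived, and it \emph{is} the theorem --- everything before and after it is routine. Two points in particular would need real work. First, Definition \ref{Def2-2}(iv) only prescribes the motion of a single interval; for a union of intervals the evolution (including the collision/merging rule and the fact that each component translates rigidly with velocity $-m_j$ before merging) has to be extracted from the construction in \cite[Theorem 2.1]{Broc2000MR1758811}, and the set of slices on which a merger occurs before time $t$ must be shown to contribute $o(t)$ to the energy. Second, the differentiation under the integral cannot be done slice by slice naively, since $n(c,a)$ jumps and $\partial_1 u$ degenerates at turning points; Brock handles this with a careful lower bound (via Fatou, as you indicate) that produces the weights $W_j$ explicitly in terms of $\partial_1u(a_j)$ and $\partial_1u(b_j)$, and the positivity of those weights on the relevant set is what makes the conclusion non-vacuous. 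Asserting the identity schematically and then reading off its consequences is an accurate map of the proof, but it defers rather than supplies the argument; for the purposes of this paper the honest course is the one the authors take, namely to cite \cite[Theorem 6.2]{Broc2000MR1758811}.
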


\subsection{Some auxiliary lemmas}
We now provide three useful estimates that will play a crucial role in the subsequent proofs of the main results. Recalling Definition \ref{csts}, for a given function $u\in \s$, we denote by $u^t$ the CStS of $u$ with respect to $x_1$.

\begin{lemma}\label{key1}
Let $u\in C^1(\R^2)$ satisfy $u(x)\to -\infty$ as $|x|\to \infty$.  Let $c_0$ and $c_1$ be two constants such that $c_0>c_1$. Then
\begin{equation*}
 \int_{\R^2}|\nabla (u-c_0)_+^t|^2\d x-\int_{\R^2}|\nabla (u-c_0)_+|^2\d x\ge \int_{\R^2}|\nabla (u-c_1)_+^t|^2\d x-\int_{\R^2}|\nabla (u-c_1)_+|^2\d x
\end{equation*}
for any $t\ge0$.
\end{lemma}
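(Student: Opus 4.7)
The plan is to write $(u-c_1)_+ = (u-c_0)_+ + g$ with $g := \min((u-c_1)_+,\, c_0-c_1)$, establish the analogous decomposition for the continuous Steiner symmetrizations, and verify that the Dirichlet energies split additively in both pictures, so that the claimed inequality reduces to Proposition \ref{pro0}(10) applied to $g$.

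First I would handle the symmetrized side. Consider the nondecreasing maps $\phi(r) = (r-(c_0-c_1))_+$ and $\psi(r) = \min(r,\, c_0-c_1)$ on $\R$. A case check gives $\phi((u-c_1)_+) = (u-c_0)_+$ and $\psi((u-c_1)_+) = g$, so the commutativity of CStS with nondecreasing transformations (Proposition \ref{pro0}(3)) yields
$$(u-c_0)_+^t = \bigl((u-c_1)_+^t - (c_0-c_1)\bigr)_+, \qquad g^t = \min\bigl((u-c_1)_+^t,\, c_0-c_1\bigr),$$
and a pointwise check in the two cases $(u-c_1)_+^t \ge c_0-c_1$ or $(u-c_1)_+^t < c_0-c_1$ then delivers the decomposition $(u-c_1)_+^t = (u-c_0)_+^t + g^t$.

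Next, in both the original and symmetrized pictures the gradients of the two summands have essentially disjoint support. For the originals: $(u-c_0)_+$ vanishes on $\{u\le c_0\}$, while $g\equiv c_0-c_1$ on $\{u>c_0\}$. For the symmetrized versions: $(u-c_0)_+^t$ vanishes on $\{(u-c_1)_+^t \le c_0-c_1\}$, while $g^t\equiv c_0-c_1$ on the complement. Since $u\in C^1(\R^2)$ with $\{u>c_1\}$ bounded, all functions involved are Lipschitz with compact support (using Proposition \ref{pro0}(9) on the symmetrized side), and the standard fact that the gradient of a Lipschitz function vanishes almost everywhere on its level sets gives
$$\int_{\R^2}|\nabla(u-c_1)_+|^2\,\d x = \int_{\R^2}|\nabla(u-c_0)_+|^2\,\d x + \int_{\R^2}|\nabla g|^2\,\d x,$$
together with the identical identity with $t$-superscripts. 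Subtracting these two, the claimed inequality collapses to
$$\int_{\R^2}|\nabla g|^2\,\d x \ge \int_{\R^2}|\nabla g^t|^2\,\d x,$$
which is precisely Proposition \ref{pro0}(10) applied to $g$ with $G(r)=r^2$.

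The only real conceptual point is noticing the clean two-term splitting and that it is preserved by CStS via commutativity; once this is in hand, the gradient disjointness and the final appeal to the energy-decreasing property are routine. The hypothesis $u\to -\infty$ as $|x|\to\infty$ enters precisely to ensure $\{u>c_1\}$ is bounded, so that $(u-c_i)_+$ and $g$ all have compact support and lie in $\s$, making Proposition \ref{pro0} applicable.
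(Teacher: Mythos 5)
Your proof is correct and follows essentially the same route as the paper: the function $g=\min((u-c_1)_+,\,c_0-c_1)$ is exactly the truncation $\phi(u)$ used in the paper's proof, and both arguments reduce the claim, via commutativity of CStS with nondecreasing maps, to the energy-decreasing property of Proposition \ref{pro0}(10) applied to this truncation. The only difference is presentational: you phrase the reduction as an additive splitting of the Dirichlet energies with disjoint gradient supports, while the paper writes the same splitting directly as level-set integrals.
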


\begin{proof}
  In view of Proposition \ref{pro0} (3), we observe that $(u-c_0)_+^t=(u^t-c_0)_+$. Thus, we have
  \begin{equation*}
    \int_{\R^2}|\nabla (u-c_0)_+^t|^2\d x-\int_{\R^2}|\nabla (u-c_0)_+|^2\d x=\int_{\{u^t>c_0\}}|\nabla u^t|^2\d x-\int_{\{u>c_0\}}|\nabla u|^2\d x.
  \end{equation*}
  Similarly,
  \begin{equation*}
    \int_{\R^2}|\nabla (u-c_1)_+^t|^2\d x-\int_{\R^2}|\nabla (u-c_1)_+|^2\d x=\int_{\{u^t>c_1\}}|\nabla u^t|^2\d x-\int_{\{u>c_1\}}|\nabla u|^2\d x.
  \end{equation*}
  We now estimate
  \begin{equation*}
  \begin{split}
      &  \int_{\R^2}|\nabla (u-c_0)_+^t|^2\d x-\int_{\R^2}|\nabla (u-c_0)_+|^2\d x \\
       & = \int_{\{u^t>c_0\}}|\nabla u^t|^2\d x-\int_{\{u>c_0\}}|\nabla u|^2\d x \\
       & = \left(\int_{\{u^t>c_1\}}|\nabla u^t|^2\d x-\int_{\{u>c_1\}}|\nabla u|^2\d x\right)- \left(\int_{\{c_0\ge u^t>c_1\}}|\nabla u^t|^2\d x-\int_{\{c_0\ge u>c_1\}}|\nabla u|^2\d x\right)\\
       &= \int_{\R^2}|\nabla (u-c_1)_+^t|^2\d x-\int_{\R^2}|\nabla (u-c_1)_+|^2\d x\\
       &\ \ \ \ \ \ - \left(\int_{\{c_0\ge u^t>c_1\}}|\nabla u^t|^2\d x-\int_{\{c_0\ge u>c_1\}}|\nabla u|^2\d x\right)
  \end{split}
  \end{equation*}
  So it suffices to prove that
  \begin{equation}\label{2-2}
   \int_{\{c_0\ge u^t>c_1\}}|\nabla u^t|^2\d x-\int_{\{c_0\ge u>c_1\}}|\nabla u|^2\d x\le 0.
  \end{equation}
  Consider a nondecreasing function $\phi: \R \to \R$ defined by
  \begin{equation*}
    \phi(s)=\begin{cases}
              c_0-c_1, & \mbox{if}\  s\ge c_0, \\
              s-c_1, & \mbox{if}\  c_1<s<c_0, \\
              0, & \mbox{if}\  s\le c_1.
            \end{cases}
  \end{equation*}
It is clear that $\phi(u)$ has compact support, as $u(x) \to -\infty$ when $|x| \to \infty$. By the definition of $\phi$, we first have
  \begin{equation}\label{2-3}
     \int_{\R^2} |\nabla \phi(u)|^2\d x = \int_{\{c_0\ge u>c_1\}}|\nabla u|^2\d x.
  \end{equation}
    By Proposition \ref{pro0} (3), we observe that $[\phi(u)]^t=\phi(u^t)$. Hence, it follows that
\begin{equation}\label{2-4}
  \int_{\R^2} |\nabla [\phi(u)]^t|^2\d x  = \int_{\R^2} |\nabla \phi(u^t)|^2\d x=\int_{\{c_0\ge u^t>c_1\}}|\nabla u^t|^2\d x.
\end{equation}
On the other hand, Proposition \ref{pro0} (10) implies that
  \begin{equation}\label{2-5}
    \int_{\R^2} |\nabla [\phi(u)]^t|^2\d x\le \int_{\R^2} |\nabla \phi(u)|^2\d x.
  \end{equation}
Now, \eqref{2-2} follows directly from the combination of \eqref{2-3}, \eqref{2-4}, and \eqref{2-5}. The proof is thus complete.
\end{proof}

\begin{lemma}\label{key2}
Let $u: \R^2 \to \R$ is a nonnegative, Lipschitz continuous function with compact support. Let $V$ be a measurable subset of $\R^2$. Suppose that $u\equiv c$ in $V$ for some constant $c>0$. Then
  \begin{equation*}
    \int_{V}|u^t-u|\,\d x=o(t)
  \end{equation*}
  as $t\to0$.
\end{lemma}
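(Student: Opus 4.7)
The plan is to combine a uniform pointwise bound $|u^t-u|\le LRt$ on $V$ with a measure estimate on the set where $u^t$ differs from $c$ on $V$; the product of the two will then yield $o(t)$.

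Since $u$ is $L$-Lipschitz with $\operatorname{supp}(u)\subset B_R$ for some $R>0$, Proposition \ref{pro0}(10) gives $|u^t(x)-u(x)|\le LRt$ for every $x\in B_R$, and in particular $|u^t-c|\le LRt$ on $V$ (where $u\equiv c$). Because the integrand $|u^t-u|=|u^t-c|$ vanishes on $V\cap\{u^t=c\}$, we have
\begin{equation*}
\int_V|u^t-u|\,\d x \;=\; \int_{V\cap\{u^t\ne c\}}|u^t-c|\,\d x \;\le\; LRt\cdot \LL^2(V\cap\{u^t\ne c\}),
\end{equation*}
so the problem reduces to showing $\LL^2(V\cap\{u^t\ne c\})\to 0$ as $t\to 0$.

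For this step, the pointwise bound forces $u^t\in[c-LRt,c)\cup(c,c+LRt]$ on $V\cap\{u^t\ne c\}$, whence
\begin{equation*}
V\cap\{u^t\ne c\}\subset\{u^t\in[c-LRt,c)\}\cup\{u^t\in(c,c+LRt]\}.
\end{equation*}
The equimeasurability assertion in Proposition \ref{pro0}(1) then transfers the measures of these half-open intervals back to $u$ (since $\LL^2(\{u^t\ge a\})=\LL^2(\{u\ge a\})$ for every $a>0$, and differences of these give the interval measures), producing
\begin{equation*}
\LL^2(V\cap\{u^t\ne c\})\le \LL^2(\{u\in[c-LRt,c)\})+\LL^2(\{u\in(c,c+LRt]\}).
\end{equation*}
As $t\to 0^+$, the two families on the right nest downward to $\emptyset$ and are contained in $\operatorname{supp}(u)$, which has finite Lebesgue measure; continuity of Lebesgue measure along nested decreasing sequences then gives that each term vanishes, finishing the argument.

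The main obstacle is producing the $o(1)$ estimate for $\LL^2(V\cap\{u^t\ne c\})$: a naive use of Proposition \ref{pro0}(10) only yields $\int_V|u^t-u|\,\d x\le LRt\cdot \LL^2(V)=O(t)$, one order short of the target. The critical improvement comes from the fact that $V\subset\{u=c\}$, so equimeasurability pins $u^t$ to the exact value $c$ on most of $V$ for small $t$, rather than merely forcing it to lie within $LRt$ of $c$.
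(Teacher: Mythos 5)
Your proof is correct, and its skeleton coincides with the paper's: both reduce the claim to the bound $\int_V|u^t-u|\,\d x\le Ct\,\LL^2(V\cap\{u^t\ne c\})$ via Proposition \ref{pro0}(10) and then show that $\LL^2(V\cap\{u^t\ne c\})\to 0$. Where you genuinely diverge is in how that measure estimate is obtained. The paper applies the commutativity property (Proposition \ref{pro0}(3)) to the indicator functions $\phi_1=1_{(c,\infty)}$ and $\phi_2=1_{[c,\infty)}$ and invokes $L^1$-continuity of $t\mapsto u^t$ (Proposition \ref{pro0}(6)) to conclude $\phi_j(u^t)\to\phi_j(u)$ in $L^1$, which dominates $\LL^2(V\cap\{u^t\ne c\})$. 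You instead trap $u^t$ in $[c-LRt,c)\cup(c,c+LRt]$ on the bad set and use equimeasurability to transfer the measure of these thin slabs back to $u$, finishing with downward continuity of Lebesgue measure; this avoids the $L^1$-continuity property entirely and is slightly more quantitative, since it bounds the bad set by $\LL^2(\{c-LRt\le u<c\})+\LL^2(\{c<u\le c+LRt\})$. Two small points you should make explicit: Proposition \ref{pro0}(1) is stated for strict super-level sets, so the identity $\LL^2(\{u^t\ge a\})=\LL^2(\{u\ge a\})$ that you use for the slab $[c-LRt,c)$ requires the one-line approximation $\{u^t\ge a\}=\bigcap_n\{u^t>a-1/n\}$ together with continuity of measure (all sets involved have finite measure since they sit inside $\operatorname{supp}(u)$); and the levels $c-LRt$ must stay positive, which holds for $t<c/(LR)$ and is where the hypothesis $c>0$ enters. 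Neither affects the validity of the argument.
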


\begin{proof}
Let $\phi_1(s)=1_{(c,\infty)}(s)$ and $\phi_2(s)=1_{[c,\infty)}(s)$ be two nondecreasing functions. In view of Proposition \ref{pro0} (3) and (6), we have $\phi_j(u^t)=[\phi_j(u)]^t$, and $\phi_j(u^t)\to \phi_j(u)$ in $L^1(\R^2)$ as $t\to 0$ for $j=1, 2$. It follows that $\LL^2(V\cap\{u^t\not= c\})\to 0$ as $t\to 0$. By Proposition \ref{pro0} (10), it follows that $\|u^t-u\|_{L^\infty(\R^2)}\le Ct$ for some constant $C>0$ independent of $t$. Therefore, we conclude that
\begin{equation*}
 \left| \int_{V}(u^t-u)\,\d x \right|\le \int_{V}|u^t-u|\,\d x\le Ct \LL^2(V\cap\{u^t\not= c\})=o(t),
\end{equation*}
from which the desired result follows. The proof is thus complete.
\end{proof}

\begin{lemma}\label{key3}
  Let $u: \R^2 \to \R$ is a nonnegative, Lipschitz continuous function with compact support. Let $\Gamma \subset \R^2$ be a Jordan curve, and let $V=\operatorname{int}(\Gamma)$.

  Assume that $u$ is continuously differentiable and weakly superharmonic in $V$. Assume further that $u=c$ on $\Gamma$ for some constant $c\in \R$ and $u > c$ in $V$. Let ${\gamma_j}$ be a sequence of decreasing regular values of $u$ satisfying $\gamma_j > c$ for $j = 1, 2, \dots$, and $\lim_{j \to \infty} \gamma_j = c$. Then
  \begin{equation}\label{2-22}
    \int_{V}(u^t-u)\,\d x=o(t)
  \end{equation}
  as $t\to0$.
\end{lemma}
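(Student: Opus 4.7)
The plan is to reduce the assertion to a statement about the positive part $w := (u - c)_+$, and then exploit the regular values $\gamma_j$ to decompose $w$ into pieces where Lemma~\ref{key2} applies directly, leaving only a controllable remainder. Using the identity $w^t = (u^t - c)_+$ from Proposition~\ref{pro0} (3), Cavalieri's principle (Proposition~\ref{pro0} (5)), and the uniform Lipschitz-type bound $|u^t - u| \le C t$ of Proposition~\ref{pro0} (10), one first shows
\[
\int_V (u^t - u)\,\d x = \int_V (w^t - w)\,\d x + o(t).
\]
The discrepancy arises from $V_- := V \cap \{u^t < c\}$, which sits inside $V \cap \{c < u < c + C t\}$ and has vanishing measure as $t \to 0$ since $u > c$ on $V$; combined with the $O(t)$ integrand on $V_-$, this gives an $o(t)$ contribution. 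It therefore suffices to prove $\int_V (w^t - w)\,\d x = o(t)$.

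For each fixed regular value $\gamma_j > c$ I split
\[
w = f_j(u) + (u - \gamma_j)_+, \qquad f_j(s) := \max\{0,\, \min(s,\gamma_j) - c\},
\]
so by Proposition~\ref{pro0} (3) the analogous decomposition holds for $w^t$ with $u$ replaced by $u^t$. The set $W_j := \{u > \gamma_j\} \cap V$ has compact closure in $V$ (since $u = c < \gamma_j$ on $\Gamma$) and $C^1$ boundary (since $\gamma_j$ is regular). On $W_j$ the function $f_j(u)$ is identically $\gamma_j - c$, so Lemma~\ref{key2} with $V \to W_j$ and $c \to \gamma_j - c$ yields $\int_{W_j} |f_j(u)^t - f_j(u)|\,\d x = o(t)$ as $t \to 0$. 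On $V \setminus W_j = \{c < u \le \gamma_j\} \cap V$ the Lipschitz bound gives $|f_j(u)^t - f_j(u)| \le C t$, contributing at most $C t\, \LL^2(V \setminus W_j)$, and $\LL^2(V \setminus W_j) \to 0$ as $j \to \infty$.

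The remaining task is to handle the high-level piece $\int_V ((u^t - \gamma_j)_+ - (u - \gamma_j)_+)\,\d x$. My plan is to iterate the same decomposition along further regular values $\gamma_j < \gamma_{j-1} < \cdots < \gamma_{j_*}$ from the given sequence, peeling off successive thin shells and invoking Lemma~\ref{key2} on each; the final residue $(u - \gamma_{j_*})_+$, with $\gamma_{j_*}$ close to $\sup u$, is supported on $\{u > \gamma_{j_*}\}$, so non-expansivity of $x \mapsto x_+$ plus $|u^t - u| \le C t$ bounds its contribution by $2 C t\, \LL^2(\{u > \gamma_{j_*}\})$. Given $\epsilon > 0$, a careful balancing of the decomposition depth, the choice of $\gamma_{j_*}$, and $t_0 = t_0(\epsilon)$ should yield $|\int_V (w^t - w)\,\d x| < \epsilon t$ for $t < t_0$. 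The main obstacle is to keep the accumulated $O(t\,\LL^2)$ shell-errors from overwhelming the desired $o(t)$ bound; this is where the superharmonicity of $u$ in $V$ enters, combined with the regularity of $\partial W_j$ coming from $\gamma_j$ being regular, to control the shell-measure decay.
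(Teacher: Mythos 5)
Your reduction to $w=(u-c)_+$ and your treatment of the low-level truncation $f_j(u)$ are sound and close in spirit to what the paper does for the thin shell near $\Gamma$ (the paper splits the domain into the components $G_i$ of $\{u>\gamma_k\}\cap V$ plus a shell of small measure, rather than splitting the function, but the effect is the same: one regular value $\gamma_k$ close to $c$ confines the uncontrolled $O(t)$ error to a set of measure $<\varepsilon/C$). The genuine gap is in the high-level piece $\int_V\left((u^t-\gamma_j)_+-(u-\gamma_j)_+\right)\d x$. Your iteration accumulates, from each intermediate band $\{\gamma_l<u\le\gamma_{l-1}\}\cap V$, an error of size $Ct\,\LL^2(\{\gamma_l<u\le\gamma_{l-1}\}\cap V)$; since these bands are disjoint and tile $\{\gamma_j<u\le\gamma_{j_*}\}\cap V$, the total is $Ct\,\LL^2(\{\gamma_j<u\le\gamma_{j_*}\}\cap V)$, which is $O(t)$ with a constant that does not shrink as you refine the ladder. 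Superharmonicity cannot repair this: it constrains the topology of level sets, not the measure of the region between two levels. A further obstruction is that the hypothesis only supplies regular values $\gamma_j\downarrow c$, so the ladder $\gamma_j<\cdots<\gamma_{j_*}$ with $\gamma_{j_*}$ near $\sup u$ is not available from the data (and Sard's theorem for merely $C^1$ functions on $\R^2$ does not provide regular values either).

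The idea you are missing, which is the heart of the paper's proof, is the local rearrangement property of the CStS for small times: for each connected component $G_i$ of $\{u>\gamma_k\}\cap V$ one can choose pairwise disjoint neighborhoods $U_i$ with $\overline{G_i}\subset U_i\subset V$ and $\{u>\gamma_k\}\cap U_i=G_i$, and then $(u^t-\gamma_k)_+1_{U_i}$ is a rearrangement of $(u-\gamma_k)_+1_{U_i}$ for all sufficiently small $t$. Hence $\int_{U_i}(u^t-\gamma_k)_+\,\d x=\int_{U_i}(u-\gamma_k)_+\,\d x$ exactly, and the discrepancy over $G_i$ reduces to an integral over $U_i\setminus G_i$ supported where $u^t>\gamma_k\ge u$, a set whose measure vanishes as $t\to0$; this yields $o(t)$ with no accumulation over levels. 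A final step (the fundamental-theorem-of-calculus identity with $f=1_{(\gamma_k,\infty)}$) converts $\int_{G_i}\left[(u^t-\gamma_k)_+-(u-\gamma_k)_+\right]\d x=o(t)$ into $\int_{G_i}(u^t-u)\,\d x=o(t)$. Without this localization, the global identity $\int_{\R^2}(u^t-\gamma_k)_+\,\d x=\int_{\R^2}(u-\gamma_k)_+\,\d x$ is of no use on $V$, since $\{u>\gamma_k\}$ may have components outside $\overline V$ and mass could a priori be exchanged across $\Gamma$.
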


\begin{proof}
To establish \eqref{2-22}, it is enough to demonstrate that, for any $\varepsilon > 0$,
  \begin{equation}\label{2-23}
    \limsup_{t\to 0}\frac{\left|\int_{V}(u^t-u)\,\d x \right|}{t}\le \varepsilon.
  \end{equation}
   In view of Proposition \ref{pro0} (10), we have $\|u^t-u\|_{L^\infty(\R^2)}\le Ct$ for some constant $C>0$ independent of $t$. Since $\gamma_j$ is a regular value of $u$, the level set $u^{-1}(\{\gamma_j\})\cap V$ consists of a finite union of $C^1$ simple closed curves, denoted by $\{\ell_i\}_{i=1}^{n_j}$. Set $G_i=\operatorname{int}(\ell_i)$. Observe that $u$ is weakly superharmonic in $V$. Hence,
  \begin{equation*}
    V\backslash\left(\cup_{i=1}^{n_j}\overline{G_i}\right)=\{x\in V\mid c<u(x)<\gamma_j\}
  \end{equation*}
  which implies that $\LL^2\left(V\backslash\left(\cup_{i=1}^{n_j}\overline{G_i}\right)\right)\to 0$ as $j\to \infty$. Fix a sufficiently large integer $k$ such that
  \begin{equation}\label{2-23'}
    C\LL^2\left(V\backslash\left(\cup_{i=1}^{n_{k}}\overline{G_i}\right)\right)<\varepsilon.
  \end{equation}
   We compute
  \begin{equation}\label{2-24}
    \int_V (u^t-u)\,\d x=\sum_{i=1}^{n_{k}}\int_{G_i}(u^t-u)\,\d x+\int_{V\backslash\left(\cup_{i=1}^{n_k}\overline{G_i}\right)}(u^t-u)\,\d x.
  \end{equation}
Now let us show that
\begin{equation}\label{2-25}
  \int_{G_i}(u^t-u)\,\d x=o(t),\ \ \ i=1,\dots, n_k,
\end{equation}
as $t\to0$. We can choose pairwise disjoint domains $U_i$, $i=1,\dots, n_k$, such that $\overline{G_i}\subset U_i\subset V$ and $\{u>\gamma_k\}\cap U_i=G_i$ for each $i=1,\dots, n_k$. By the definition of the CStS, we observe that $(u^t-\gamma_k)_+1_{U_i}$ is a rearrangement of $(u-\gamma_k)_+1_{U_i}$ for all sufficiently small $t$. Thus, for all sufficiently small $t$,
\begin{equation*}
 \int_{U_i}(u^t-\gamma_k)_+\,\d x=\int_{U_i}(u-\gamma_k)_+\,\d x,
\end{equation*}
which yields that
\begin{equation}\label{2-26}
\begin{split}
   &\int_{G_i}(u^t-\gamma_k)_+\,\d x-\int_{G_i}(u-\gamma_k)_+\,\d x \\
   & = \left[\int_{U_i}(u^t-\gamma_k)_+\,\d x- \int_{U_i}(u-\gamma_k)_+\,\d x\right]- \int_{U_i\backslash {G_i}}(u^t-\gamma_k)_+\,\d x \\
     & =-\int_{U_i\backslash {G_i}}\left[(u^t-\gamma_k)_+-(u-\gamma_k)_+ \right]\,\d x.
\end{split}
\end{equation}
Let $f(s):=1_{(\gamma_{k}, \infty)}(s)$. By Proposition \ref{pro0} (3) and (6), we have $f(u^t)=[f(u)]^t$, and $f(u^t)\to f(u)$ in $L^1(\R^2)$ as $t\to 0$. It follows that $\LL^2\left((U_i\backslash {G_i}\right)\cap \{u^t>\gamma_k\})\to 0$ as $t\to 0$. Hence, we have
\begin{equation}\label{2-27}
\begin{split}
      \left| \int_{U_i\backslash {G_i}}\left[(u^t-\gamma_k)_+-(u-\gamma_k)_+ \right]\,\d x\right|&\le \int_{U_i\backslash {G_i}}\left|(u^t-\gamma_k)_+-(u-\gamma_k)_+ \right|\,\d x \\
     & \le Ct \LL^2\left((U_i\backslash {G_i})\cap \{u^t>\gamma_k\}\right)=o(t).
\end{split}
\end{equation}
By combining \eqref{2-26} and \eqref{2-27}, we obtain
\begin{equation*}
\begin{split}
    o(t) &  =\int_{G_i}(u^t-\gamma_k)_+\,\d x-\int_{G_i}(u-\gamma_k)_+\,\d x \\
     & =\int_{G_i}\left(\int_{0}^{1}f(u(x)+\theta (u^t(x)-u(x)))\d \theta   \right)(u^t(x)-u(x))\,  \d x.
\end{split}
\end{equation*}
Therefore, we have
\begin{equation}\label{2-210}
\begin{split}
\int_{G_i}&(u^t-u)\,\d x=\int_{G_i}f(u(x))(u^t(x)-u(x))\,\d x\\
      =\int_{G_i}& \left(f(u(x))- \int_{0}^{1}f(u(x)+\theta (u^t(x)-u(x)))\d \theta  \right)(u^t(x)-u(x))\,  \d x+o(t).
\end{split}
\end{equation}
Recalling that $\|u^t-u\|_{L^\infty(\R^2)}\le Ct$ and the monotonicity of $f$, it follows that
\begin{equation}\label{2-211}
\begin{split}
    &  \left|\int_{G_i} \left(f(u(x))- \int_{0}^{1}f(u(x)+\theta (u^t(x)-u(x)))\d \theta  \right)(u^t(x)-u(x)) \, \d x \right|\\
     & \le Ct  \int_{G_i} \left | f(u(x))- \int_{0}^{1}f(u(x)+\theta (u^t(x)-u(x)))\d \theta  \right|  \d x \\
     & \le Ct \int_{G_i} |f(u(x))-f(u^t(x))|\,\d x\le Ct\|f(u^t)-f(u)\|_{L^1(\R^2)}=o(t).
\end{split}
\end{equation}
Thus, \eqref{2-25} follows directly from the combination of \eqref{2-210} and \eqref{2-211}. In view of \eqref{2-24}, we conclude that
\begin{equation*}
   \int_V (u^t-u)\,\d x=o(t)+\int_{V\backslash\left(\cup_{i=1}^{n_k}\overline{G_i}\right)}(u^t-u)\,\d x,
\end{equation*}
which yields that
\begin{equation*}
    \limsup_{t\to 0}\frac{\left|\int_{V}(u^t-u)\,\d x \right|}{t} \le \limsup_{t\to 0}\frac{\left|\int_{V\backslash\left(\cup_{i=1}^{n_k}\overline{G_i}\right)}(u^t-u)\,\d x \right|}{t} \le  C\LL^2\left(V\backslash\left(\cup_{i=1}^{n_{k}}\overline{G_i}\right)\right)\le \varepsilon,
\end{equation*}
where \eqref{2-23'} was used in the final inequality. Thus, the proof of \eqref{2-23} is complete, concluding the proof of the lemma.

\end{proof}

\section{Radial symmetry of uniformly rotating multi-patch solutions}\label{s3}

In this section, we establish the radial symmetry of uniformly rotating multi-patch solutions to \eqref{1-1}. Let $D_j$ $(j = 1, \dots, n)$ be bounded domains of $\R^2$, and let $\alpha_j$ $(j = 1, \dots, n)$ denote given constants. Assume that the boundary of each $D_j$ is composed of a finite collection of mutually disjoint Jordan curves. Assume $\omega_0=\sum_{j=1}^n \alpha_j 1_{D_j}$ is a stationary multi-patch solution \eqref{1-1}, in the sense that it satisfies \eqref{1-4-1} for some $\Omega\in \R$. Our goal is to prove that $\omega_0$ is radially symmetric if $\Omega\le \inf \omega_0/2$ or $\Omega\ge \sup \omega_0/2$. Here, the term ``radially symmetric" is to be understood as radially symmetric, up to a translation, in the stationary case (i.e., $\Omega = 0$).

Note that the desired radial symmetry will be obtained if we can demonstrate that the stream function $u = \mathcal{N} \ast \omega_0+\frac{\Omega}{2}|x|^2$ is radial, up to a translation in $\R^2$. We now proceed to demonstrate that $u$ possesses this property.

We denote the outer boundary of $D_j$ by $\Gamma_0^{(j)}$, and the inner boundaries by $\Gamma^{(j)}_i$ for $i = 1, \dots, m_j$. Set $V_i^{(j)}=\operatorname{int}(\Gamma^{(j)}_i)$ for $i=0, \dots, m_j$. Under the given assumptions, there exist constants $c^{(j)}_i$ $(j=1, \dots, n;\ i=0, \dots, m_j)$ such that
\begin{align}\label{3-1}
\begin{cases}
-\Delta u=\sum_{j=1}^n \alpha_j 1_{D_j}-2\Omega \ &  \text{in}\ \R^2,\\
u=c^{(j)}_i\ &\text{on}\ \Gamma^{(j)}_i,\ \text{for}\ j=1, \dots, n;\ i=0, \dots, m_j.
\end{cases}
\end{align}
For clarity, we will consider the two cases $\Omega \leq \inf \omega_0 / 2$ and $\Omega \geq \sup \omega_0 / 2$ separately.
\subsection{The case $\Omega\le \inf \omega_0/2$}\label{subsec3-1}
In this case, the stream function $u = \mathcal{N} \ast \omega_0+\frac{\Omega}{2}|x|^2$ is weakly superharmonic. Note that $\omega$ has compact support. It also follows that $\Omega \le 0$ and $u(x)\to -\infty$ as $|x|\to \infty$. Furthermore, we have
\begin{equation*}
  \nabla u(x)=\frac{1}{2\pi}\int_{\R^2}\frac{x-y}{|x-y|^2}\omega_0(y)\,\d y+\Omega\, x=\Omega\, x+\frac{1}{2\pi}\int_{\R^2}\omega_0(y)\,\d y\cdot \frac{x}{|x|^2}+O\left(\frac{1}{|x|^2}\right)
\end{equation*}
as $|x|\to \infty$. By the implicit function theorem, there exists $\delta_0 <\min \{c^{(j)}_i: j=1, \dots, n;\ i=0, \dots, m_j\}$ such that, for all $c < \delta_0$, the level set $\Im_c:=\{ u = c \}$ is a simple, closed, $C^1$ curve enclosing the origin. By the maximum principle, we have $\{ u > c_0 \} = \Int(\Im_{c_0})$. We will demonstrate that $(u - c)_+$ is radial, up to a translation, for any $c < \delta_0$, from which the desired result follows. We now fix such a $c_0<\delta_0$. Our goal is to establish that $(u - c_0)_+$ is a radial function up to a translation in $\mathbb{R}^2$. The result will be proved if we can show that $(u - c_0)_+$ is locally symmetric. Indeed, if $(u - c_0)_+$ is locally symmetric, then $\{u>c_0\}$ are countable unions of mutually disjoint disks. Since $\{u>c_0\}= \Int(\Im_{c_0})$ is connected, it follows that $\{u>c_0\}$ is a disk, denoted by $B_{R}(a)$, and hence $\Im_{c_0}=\partial B_R(a)$. Notice that $(u-c_0)_+$ is locally symmetric and weakly superharmonic in $B_{R}(a)$. By Lemma \ref{key0}, it follows that $(u-c_0)_+$ is radial in $B_{R}(a)$, and hence $(u - c_0)_+$ is radial with respect to $a$ in $\R^2$. Therefore, our task now is to show that $(u - c_0)_+$ is locally symmetric. We will show that $(u - c_0)_+$ is locally symmetric in the direction $x_1$. The local symmetry in other directions can be established in a similar manner. Recalling Definition \ref{csts}, let $(u - c_0)_+^t$ denote the CStS of $(u - c_0)_+$ with respect to $x_1$. Thanks to Proposition \ref{pro2}, it suffices to prove that
\begin{equation*}
   \int_{\R^2}|\nabla (u-c_0)_+^t|^2\d x-\int_{\R^2}|\nabla (u-c_0)_+|^2\d x= o(t)
\end{equation*}
as $t\to 0$. By Proposition \ref{pro0} (10), we have
\begin{equation*}
   \int_{\R^2}|\nabla (u-c_0)_+^t|^2\d x-\int_{\R^2}|\nabla (u-c_0)_+|^2\d x\le 0.
\end{equation*}
So it is reduced to proving that
\begin{equation*}
  \int_{\R^2}|\nabla (u-c_0)_+^t|^2\d x-\int_{\R^2}|\nabla (u-c_0)_+|^2\d x\ge o(t)
\end{equation*}
as $t\to 0$. This can be proven by showing that for any $\varepsilon>0$, the following inequality holds:
\begin{equation}\label{3-2}
  \int_{\R^2}|\nabla (u-c_0)_+^t|^2\d x-\int_{\R^2}|\nabla (u-c_0)_+|^2\d x\ge -\varepsilon t
\end{equation}
for all sufficiently small $t$. By Lemma \ref{key1}, \eqref{3-2} can be established by showing that there exists $c_1<c_0$ such that
\begin{equation}\label{3-3}
   \int_{\R^2}|\nabla (u-c_1)_+^t|^2\d x-\int_{\R^2}|\nabla (u-c_1)_+|^2\d x\ge -\varepsilon t
\end{equation}
for all sufficiently small $t$. Next, we focus on demonstrating the existence of such a value of $c_1$. Recalling \eqref{3-1}, let $[(u-c_1)_+^t-(u-c_1)_+]$ be a test function. We obtain
\begin{equation}\label{3-4}
  \int_{\R^2}\nabla u\cdot \nabla (u-c_1)_+^t\d x-\int_{\R^2}\nabla u\cdot \nabla (u-c_1)_+\d x =\sum_{j=1}^{n}\alpha_j\int_{D_j}[(u-c_1)_+^t-(u-c_1)_+]\,\d x,
\end{equation}
where we used the fact that
\begin{equation*}
  \int_{\R^2} (u-c_1)_+^t\d x= \int_{\R^2} (u-c_1)_+\d x.
\end{equation*}
Let $U=\Int(\Im_{c_1})$. Note that the left-hand side of \eqref{3-4} can be estimated as follows:
\begin{equation}\label{3-5}
\begin{split}
    \int_{\R^2}& \nabla u\cdot \nabla (u-c_1)_+^t\d x-\int_{\R^2}\nabla u\cdot \nabla (u-c_1)_+\d x \\
     & =\int_{\R^2}\nabla u\cdot \nabla (u^t-c_1)_+\d x-\int_{\R^2}\nabla u\cdot \nabla (u-c_1)_+\d x \\
     & =\int_{U^t}\nabla u\cdot \nabla u^t\,\d x-\int_{U}\nabla u\cdot \nabla u\, \d x\\
     &\le \frac{1}{2}\int_{U^t}|\nabla u^t|^2\,\d x+\frac{1}{2}\int_{[U]^t}|\nabla u|^2\,\d x-\int_{U}|\nabla u|^2\, \d x\\
     & =\frac{1}{2}\left(\int_{U^t}|\nabla u^t|^2\,\d x-\int_{U}|\nabla u|^2\, \d x\right)\\
     &\ \ \ \ \ \ \ \ \ \ \ +\frac{1}{2}\left(\int_{U^t}|\nabla u|^2\,\d x-\int_{U}|\nabla u|^2\, \d x\right)\\
     &=\frac{1}{2}\left( \int_{\R^2}|\nabla (u-c_1)_+^t|^2\d x-\int_{\R^2}|\nabla (u-c_1)_+|^2\d x\right)\\
     &\ \ \ \ \ \ \ \ \ \ \ +\frac{1}{2}\left(\int_{U^t}|\nabla u|^2\,\d x-\int_{U}|\nabla u|^2\, \d x\right).
\end{split}
\end{equation}
Combining \eqref{3-4} and \eqref{3-5}, we obtian
\begin{equation*}
\begin{split}
     & \int_{\R^2}|\nabla (u-c_1)_+^t|^2\d x-\int_{\R^2}|\nabla (u-c_1)_+|^2\d x \\
     &  \ge -\left(\int_{U^t}|\nabla u|^2\,\d x-\int_{U}|\nabla u|^2\, \d x\right)+2\sum_{j=1}^{n}\alpha_j\int_{D_j}[(u-c_1)_+^t-(u-c_1)_+]\,\d x. \\
\end{split}
\end{equation*}
Therefore, \eqref{3-3} would hold provided that
\begin{equation}\label{3-6}
  \left(\int_{U^t}|\nabla u|^2\,\d x-\int_{U}|\nabla u|^2\, \d x\right)-2\sum_{j=1}^{n}\alpha_j\int_{D_j}[(u-c_1)_+^t-(u-c_1)_+]\,\d x\le \varepsilon t
\end{equation}
for all sufficiently small $t$. To summarize, our current goal is to find a $c_1<c_0$ such that \eqref{3-6} holds. Let $\gamma_k$ be a sequence of decreasing numbers satisfying $\gamma_k <c_0$ for $k = 1, 2, \dots$, and $\lim_{k \to \infty} \gamma_k = -\infty$. We will show that $\gamma_k$ satisfies our requirements if $k$ is sufficiently large. For clarity, we proceed by discussing the following two cases separately.

\smallskip
\textbf{Case 1}: $\Omega=0$. In this case, we have $\omega_0\ge \Omega=0$. Recall that
\begin{equation}\label{3-7}
  \nabla u(x)=\frac{1}{2\pi}\int_{\R^2}\frac{x-y}{|x-y|^2}\omega_0(y)\,\d y=\frac{1}{2\pi}\int_{\R^2}\omega_0(y)\,\d y\cdot \frac{x}{|x|^2}+O\left(\frac{1}{|x|^2}\right)
\end{equation}
as $|x|\to \infty$. By analyzing the radial and angular components of $\nabla u$ in \eqref{3-7}, it can be seen that the level set $\Im_{\gamma_k}$ will approximate a circle as $k\to \infty$. In fact, let $U_k=\Int(\Im_{\gamma_k})$,  $r_{k}=\sup\{r: B_r\subset U_k\}$, and $R_k=\inf \{r: U_k\subset B_r\}$. Clearly, $r_k\to \infty$ as $k\to \infty$. In standard polar coordinates $(\rho, \theta)$, the level set $\Im_{\gamma_k}$ can be parametrized as $\rho=\rho_k(\theta),\ \theta\in[0, 2\pi]$. Using \eqref{3-7}, we check that $\sup_{\theta\in[0, 2\pi]}|\rho_k'(\theta)|\le C$ for some constant $C>0$ independent of $k$. It follows that $R_k=r_k+O(1)$ as $k\to \infty$. Furthermore, we obtain $\mathcal{H}_1(\Im_{\gamma_k})\le 4\pi r_k$ for all sufficiently large $k$. Here, the symbol $\mathcal{H}_1$ denotes the one-dimensional Hausdorff measure.

Now, let us consider \eqref{3-6}, where $c_1$ is replaced by $\gamma_k$. For convenience, we denote $A=\frac{1}{2\pi}\int_{\R^2}\omega_0(y)\,\d y\not=0$. Note that
\begin{equation}\label{3-8}
\begin{split}
    \int_{U_k^t}|\nabla u|^2\,\d x-\int_{U_k}|\nabla u|^2\, \d x & =\int_{U_k^t\backslash U_k}|\nabla u|^2\,\d x-\int_{U_k\backslash U_k^t}|\nabla u|^2\, \d x \\
      =\int_{U_k^t\backslash U_k}&\left(|\nabla u|^2-\frac{A^2}{r_k^2}\right)\,\d x-\int_{U_k\backslash U_k^t}\left(|\nabla u|^2-\frac{A^2}{r_k^2}\right)\, \d x,
\end{split}
\end{equation}
where we used that fact that $\LL^2(U_k^t)=\LL^2(U_k)$. We now proceed to demonstrate that, for sufficiently large values of $k$, the following holds:
\begin{equation}\label{3-9}
 \left|\int_{U_k^t\backslash U_k}\left(|\nabla u|^2-\frac{A^2}{r_k^2}\right)\,\d x-\int_{U_k\backslash U_k^t}\left(|\nabla u|^2-\frac{A^2}{r_k^2}\right)\, \d x\right|\le \frac{\varepsilon}{2}t,\ \ \ \forall\, t\ge 0,
\end{equation}
 In view of \eqref{3-7}, we have
\begin{equation*}
  |\nabla u (x)|^2=\frac{A^2}{|x|^2}+O\left(\frac{1}{|x|^3}\right)
\end{equation*}
as $|x|\to \infty$. It follows that
\begin{equation}\label{3-10}
  |\nabla u (x)|^2-\frac{A^2}{r_k^2}=O\left(\frac{1}{r_k^3}\right),\ \ \ x\in U_k^t\triangle U_k,
\end{equation}
where the symbol $\triangle$ means the symmetric difference. Next, we proceed to estimate $\LL^2(U_k^t\triangle U_k)$. Recalling the definitions \ref{Def2-2} and \ref{csts}, we observe that
\begin{equation*}
  U_k^t\triangle U_k\subset \{x\in \R^2: \text{dist}(x, \Im_{\gamma_k})\le R_k t\}.
\end{equation*}
for all $t\ge0$. In fact, this follows from the observation that the intervals move with a velocity of at most $R_k$ (see Lemma 4.1 on page 3017 of \cite{Gom2021MR4312192} for a similar argument). It follows that  $\LL^2(U_k^t\triangle U_k)\le 2\mathcal{H}_1(\Im_{\gamma_k}) R_kt$; see also Eq.\,(4.2) on page 3018 of \cite{Gom2021MR4312192}. Thus, we obtain $\LL^2(U_k^t\triangle U_k)\le 10\pi r_k^2 t$ for all $t\ge 0$ provided that $k$ is sufficiently large. In conclusion, we have
\begin{equation}\label{3-11}
\begin{split}
     & \left| \int_{U_k^t\backslash U_k}\left(|\nabla u|^2-\frac{A^2}{r_k^2}\right)\,\d x \right|+ \left| \int_{U_k^\backslash U_k^t}\left(|\nabla u|^2-\frac{A^2}{r_k^2}\right)\,\d x \right|  \\
     &\ \ \ \le \frac{C}{r_k^3}\LL^2(U_k^t\triangle U_k)\le 10\pi C r_k^{-1}t
\end{split}
\end{equation}
for any $t\ge 0$ provided that $k$ is sufficiently large. Here, $C>0$ is a constant depending on $\omega_0$, but independent of $k$ and $t$. From \eqref{3-11}, it follows that there exists a sufficiently large $k$ such that \eqref{3-9} holds, and consequently, \eqref{3-8} also holds. Now, let us fix such a value of $k$, whence, by \eqref{3-8}, we have
\begin{equation}\label{3-12}
   \int_{U_k^t}|\nabla u|^2\,\d x-\int_{U_k}|\nabla u|^2\, \d x\le \frac{\varepsilon}{2}t,\ \ \ \forall\, t\ge 0.
\end{equation}
Next, we proceed to estimate the other terms in \eqref{3-6}, with $c_1$ replaced by $\gamma_k$. For each $j\in\{1, \dots, n\}$, we have
\begin{equation}\label{3-13}
\begin{split}
    & \int_{D_j}[(u-\gamma_k)_+^t-(u-\gamma_k)_+]\,\d x \\
     &\ \  =\int_{V^{(j)}_0}[(u-\gamma_k)_+^t-(u-\gamma_k)_+]\,\d x-\sum_{i=1}^{m_j}\int_{\overline{V^{(j)}_i}}[(u-\gamma_k)_+^t-(u-\gamma_k)_+]\,\d x.
\end{split}
\end{equation}
Recall that $V_i^{(j)}=\operatorname{int}(\Gamma^{(j)}_i)$. Furthermore, $(u-\gamma_k)_+=u-\gamma_k$ is weakly superharmonic in $V_i^{(j)}$, and remains constant on $\partial V_i^{(j)}=\Gamma^{(j)}_i$. Applying Lemmas \ref{key2} and \ref{key3}, we obtain
\begin{equation}\label{3-14}
\int_{V^{(j)}_0}[(u-\gamma_k)_+^t-(u-\gamma_k)_+]\,\d x  =o(t),\ \ \  \int_{\overline{V^{(j)}_i}}[(u-\gamma_k)_+^t-(u-\gamma_k)_+]\,\d x=o(t)
\end{equation}
as $t\to 0$, for any $j=1, \dots, n$ and $i=0, \dots, m_j$. Combining \eqref{3-13} and \eqref{3-14}, we get
\begin{equation}\label{3-66}
   \int_{D_j}[(u-\gamma_k)_+^t-(u-\gamma_k)_+]\,\d x=o(t),\ \ \ j=1,\dots, n,
\end{equation}
which implies
\begin{equation}\label{3-15}
 \left| \sum_{j=1}^{n}\alpha_j\int_{D_j}[(u-\gamma_k)_+^t-(u-\gamma_k)_+]\,\d x\right|\le \frac{\varepsilon}{2}t
\end{equation}
for all sufficiently small $t$. Therefore, from \eqref{3-12} and \eqref{3-15}, it follows that \eqref{3-6} holds with $c_1$ replaced by $\gamma_k$. This concludes the proof of this case.

\smallskip
\textbf{Case 2}: $\Omega\not=0$. For convenience, we adopt the notation used in the proof of the previous case. In this case, we have
\begin{equation}\label{3-16}
  \nabla u(x)=\nabla (\mathcal{N} \ast \omega_0)+\Omega\,x=\Omega\, x+\frac{1}{2\pi}\int_{\R^2}\omega_0(y)\,\d y\cdot \frac{x}{|x|^2}+O\left(\frac{1}{|x|^2}\right)
\end{equation}
as $|x|\to \infty$. A careful examination of \eqref{3-16} reveals that, in this case, the radial component of $\nabla u$ plays an even more dominant role. In standard polar coordinates $(\rho, \theta)$, the level set $\Im_{\gamma_k}$ can be parametrized as $\rho=\rho_k(\theta),\ \theta\in[0, 2\pi]$. We check that $\sup_{\theta\in[0, 2\pi]}|\rho_k'(\theta)|\le Cr_k^{-2}$ for some constant $C>0$ independent of $k$. It follows that $R_k=r_k+O\left(\frac{1}{r_k^2}\right)$ as $k\to \infty$. Next, we proceed to further examine the convexity of $U_k=\Int(\Im_{\gamma_k})$. Note that
\begin{equation*}
  \partial_{11}u(x)=\Omega+O\left(\frac{1}{|x|^2} \right),\ \ \ \partial_{12}u(x)=\partial_{21}u(x)=O\left(\frac{1}{|x|^2} \right),\ \ \  \partial_{22}u(x)=\Omega+O\left(\frac{1}{|x|^2} \right)
\end{equation*}
as $|x|\to \infty$. By \cite[Proposition 6.1.7]{Kran1999MR1730695}, it follows that $U_k$ is convex for all sufficiently large $k$. At this point, for every $x_2\in \R$, the set $U_k(x_2)=\{x_1\in \R: (x_1, x_2)\in U_k\}$ is an interval (possibly empty), denoted by $(b_k-\eta_k, b_k+\eta_k)$. Since $R_k=r_k+O\left(r_k^{-2}\right)$, we have $\sup_{x_2\in \R}|b_k|\le Cr_k^{-2}$ for some constant $C>0$ independent of $k$. Recalling the definitions \ref{Def2-2} and \ref{csts}, and noting that the intervals move with a velocity of at most $\sup_{x_2\in \R}|b_k|$, we obtain
\begin{equation*}
  U_k^t\triangle U_k\subset \{x\in \R^2: \text{dist}(x, \Im_{\gamma_k})\le Cr_k^{-2} t\}.
\end{equation*}
for all $t\ge0$. Thus, we have $\LL^2(U_k^t\triangle U_k)\le C\mathcal{H}_1(\Im_{\gamma_k}) r_k^{-2} t\le Cr_k^{-1} t$ for all $t\ge 0$ provided that $k$ is sufficiently large.

Analogous to the previous case, our initial goal is to demonstrate that
\begin{equation}\label{3-18}
   \int_{U_k^t}|\nabla u|^2\,\d x-\int_{U_k}|\nabla u|^2\, \d x\le \frac{\varepsilon}{2}t,\ \ \ \forall\, t\ge 0
\end{equation}
provided that $k$ is large enough. Note that
\begin{equation*}
  |\nabla u(x)|^2=|\nabla (\mathcal{N} \ast \omega_0)|^2+2\Omega x\cdot \nabla (\mathcal{N} \ast \omega_0)+\Omega^2|x|^2.
\end{equation*}
By the Hardy-Littlewood inequality (see Proposition \ref{pro0} (7)), we first have
\begin{equation}\label{3-19}
  \int_{U_k^t}|x|^2\,\d x-\int_{U_k}|x|^2\, \d x\le 0.
\end{equation}
Note that $|\nabla (\mathcal{N} \ast \omega_0)|\le C/|x|$ as $|x|\to \infty$. Thus, we have
\begin{equation}\label{3-20}
\begin{split}
    & \left|\int_{U_k^t}|\nabla (\mathcal{N} \ast \omega_0)|^2\,\d x-\int_{U_k}|\nabla (\mathcal{N} \ast \omega_0)|^2\, \d x\right| \\
     &\ \ =\left|\int_{U_k^t\backslash U_k}|\nabla (\mathcal{N} \ast \omega_0)|^2\,\d x-\int_{U_k\backslash U_k^t}|\nabla (\mathcal{N} \ast \omega_0)|^2\, \d x\right| \\
     &\ \  \le \frac{C}{r_k^2}\LL^2(U_k^t\triangle U_k)\le  \frac{C}{r_k^3}t\le \frac{\varepsilon}{4}t
\end{split}
\end{equation}
for all $t\ge0$, provided that $k$ is sufficiently large. Similarly, noting that $|x\cdot \nabla (\mathcal{N} \ast \omega_0)|\le C$, we have
\begin{equation}\label{3-21}
\begin{split}
    & |2\Omega|\cdot \left| \int_{U_k^t}x\cdot \nabla (\mathcal{N} \ast \omega_0)\,\d x-\int_{U_k}x\cdot \nabla (\mathcal{N} \ast \omega_0)\, \d x\right| \\
     & \ \ =|2\Omega|\cdot \left| \int_{U_k^t\backslash U_k}x\cdot \nabla (\mathcal{N} \ast \omega_0)\,\d x-\int_{U_k\backslash U_k^t}x\cdot \nabla (\mathcal{N} \ast \omega_0)\, \d x\right| \\
     & \ \ \le |2\Omega|\cdot C\LL^2(U_k^t\triangle U_k)\le  \frac{C}{r_k}t\le \frac{\varepsilon}{4}t
\end{split}
\end{equation}
for all $t\ge0$, provided that $k$ is sufficiently large. Combining \eqref{3-19}, \eqref{3-20}, and \eqref{3-21}, we obtain \eqref{3-18}.

Now, let us fix a value of $k$ for which \eqref{3-18} holds. The subsequent task in the proof is to verify that
\begin{equation}\label{3-22}
 \left| \sum_{j=1}^{n}\alpha_j\int_{D_j}[(u-\gamma_k)_+^t-(u-\gamma_k)_+]\,\d x\right|\le \frac{\varepsilon}{2}t
\end{equation}
for all sufficiently small $t$. The proof of this result is identical to that of the previous case and is therefore omitted. It follows from \eqref{3-18} and \eqref{3-22} that \eqref{3-6} holds with $c_1$ replaced by $\gamma_k$. This concludes the proof of this case.

In summary, we have proved that $\omega_0$ must be radially symmetric if $\Omega\le \inf \omega_0/2$.

\subsection{The case $\Omega \geq \sup \omega_0 / 2$}
In this case, $\Omega \ge 0$, and $u = \mathcal{N} \ast \omega_0+\frac{\Omega}{2}|x|^2$ is weakly subharmonic. Let $\tilde{u}=-u$. Then $\tilde{u}$ is weakly superharmonic, and
\begin{align}\label{3-23}
\begin{cases}
-\Delta \tilde{u}=-\sum_{j=1}^n \alpha_j 1_{D_j}+2\Omega \ &  \text{in}\ \R^2,\\
\tilde{u}=-c^{(j)}_i\ &\text{on}\ \Gamma^{(j)}_i,\ \text{for}\ j=1, \dots, n;\ i=0, \dots, m_j.
\end{cases}
\end{align}
 Repeating the proof of the previous case with $u$ replaced by $\tilde{u}$ yields the desired result. Since the details of the proof are essentially the same, they are omitted here. This completes the proof of this case. Hence, $\omega_0$ must also be radially symmetric if $\Omega \geq \sup \omega_0 / 2$.

In summary, we have proved that if a uniformly rotating multi-patch $\omega_0$ with compact support has an angular velocity satisfying $\Omega \leq \inf \omega_0 / 2$ or $\Omega \geq \sup \omega_0 / 2$, then it must be radial. This completes the proof of Theorem \ref{thm}.

\section{Radial symmetry of uniformly rotating smooth solutions} \label{s4}

In this section, we establish the radial symmetry of uniformly rotating smooth solutions to \eqref{1-1}.

Let $\omega_0 \in C^2(\R^2)$ be a compactly supported function. Suppose that $\omega(x, t) = \omega_0(e^{-i\Omega t} x)$ is a uniformly rotating smooth solution of \eqref{1-1}, meaning that it satisfies \eqref{1-5} for some $\Omega \in \mathbb{R}$. Our goal is to show that $\omega_0$ must exhibit radial symmetry if $\Omega\le \inf \omega_0/2$ or $\Omega\ge \sup \omega_0/2$. Here, the term "radial symmetry" is to be understood as radial symmetry with respect to a point, up to a translation, in the stationary case (i.e., $\Omega = 0$).

Observe that the desired radial symmetry follows if we can show that the stream function $u = \mathcal{N} \ast \omega_0 + \frac{\Omega}{2}|x|^2$ is radial, up to a translation in $\mathbb{R}^2$. We now proceed to demonstrate that $u$ possesses this property.

Note that the stream function $u$ satisfies the following problem:
\begin{align}\label{4-1}
\begin{cases}
-\Delta u=\omega_0-2\Omega \ \ \text{in}\ \R^2, & \\
u \ \text{is constant on each connected component of a regular level set of}\ \omega_0 .
\end{cases}
\end{align}
Similar to the case of multi-patch solutions, we consider the two cases $\Omega \leq \inf \omega_0 / 2$ and $\Omega \geq \sup \omega_0 / 2$ separately. For convenience, we adopt the notation used in the proof of the case of multi-patch solutions

\subsection{The case $\Omega\le \inf \omega_0/2$} In this case, $\Omega \ge 0$, and $u = \mathcal{N} \ast \omega_0+\frac{\Omega}{2}|x|^2$ is weakly superharmonic. Similar to the discussion in Subsection \ref{subsec3-1}, the problem reduces to verifying that, for any $\varepsilon > 0$, there exists $c_1 \ll 0$ such that
\begin{equation}\label{4-2}
  \left(\int_{U^t}|\nabla u|^2\,\d x-\int_{U}|\nabla u|^2\, \d x\right)-\int_{\R^2}\omega_0(x)[(u-c_1)_+^t-(u-c_1)_+]\,\d x\le \varepsilon t
\end{equation}
for all sufficiently small $t$. Let $\gamma_k$ be a decreasing sequence such that $\lim_{k \to \infty} \gamma_k = -\infty$. We will demonstrate that $\gamma_k$ satisfies the requirements for sufficiently large $k$. First, by repeating the same arguments as in Subsection \ref{subsec3-1}, we find that there exists a sufficiently large $k$ such that
\begin{equation}\label{4-3}
  \int_{U_k^t}|\nabla u|^2\,\d x-\int_{U_k}|\nabla u|^2\, \d x\le \frac{\varepsilon}{2}t,\ \ \ \forall\, t\ge 0.
\end{equation}
It remains to show that
\begin{equation}\label{4-4}
\mathcal{I}(t):=  \left| \int_{\R^2}\omega_0(x)[(u-\gamma_k)_+^t-(u-\gamma_k)_+]\,\d x\right|\le \frac{\varepsilon}{2}t
\end{equation}
for all sufficiently small $t$. To this aim, we shall employ an approximation technique. For any integer $n>1$, we can approximate $\omega_0$ by a step function $w_n$ of the form $w_n=\sum_{j=1}^{M_n}\alpha_j 1_{D_j}(x)$, which satisfies all the following properties (see, e.g., page 2997 in \cite{Gom2021MR4312192}):
\begin{itemize}
  \item [(a)]Each $D_j$ is a connected open domain with $C^2$ boundary and possibly has a finite number of holes, denoted by $\{V_i^{(j)}\}_{i=0}^{m_j}$;
  \item [(b)]Each connected component of $\partial D_j$ is a connected component of a regular level set of $ \omega_0$;
  \item [(c)]$D_i\cap D_j=\varnothing$ if $i\not=j$;
  \item [(d)]$\|w_n-\omega_0\|_{L^\infty(\R^2)}\le \frac{2}{n}\|\omega_0\|_{L^\infty(\R^2)}$.
\end{itemize}
We estimate $\mathcal{I}(t)$ as follows:
\begin{equation}\label{4-5}
\begin{split}
   \mathcal{I}(t) &   = \left| \int_{\R^2}\omega_0(x)[(u-\gamma_k)_+^t-(u-\gamma_k)_+]\,\d x\right| \\
     & \le \left| \int_{\R^2}\left[\omega_0(x)-w_n(x)\right][(u-\gamma_k)_+^t-(u-\gamma_k)_+]\,\d x\right| \\
    & \ \ \ \ \ \ +\left|\sum_{j=1}^{M_n}\alpha_j\int_{D_j}[(u-\gamma_k)_+^t-(u-\gamma_k)_+]\,\d x \right|.
\end{split}
\end{equation}
Let $R_k=\inf \{r: \text{supp}(u-\gamma_k)_+\subset B_r\}$. Then $\text{supp}(u-\gamma_k)_+^t\subset Q_{R_k}$ for all $t\ge 0$. By Proposition \ref{pro0} (10), we have $\|(u-\gamma_k)_+^t-(u-\gamma_k)_+\|_{L^\infty(\R^2)}\le Ct$ for some constant $C>0$ independent of $t$. We estimate
\begin{equation*}
\begin{split}
     & \left| \int_{\R^2}\left[\omega_0(x)-w_n(x)\right][(u-\gamma_k)_+^t-(u-\gamma_k)_+]\,\d x\right| \\
     & \ \ \ \le \pi R_k^2\|w_n-\omega_0\|_{L^\infty(\R^2)}\|[(u-\gamma_k)_+^t-(u-\gamma_k)_+]\|_{L^\infty(\R^2)}\\
     &\ \ \ \le 2\pi R_k^2 C\|\omega_0\|_{L^\infty(\R^2)}n^{-1}t.
\end{split}
\end{equation*}
Thus, there exists a sufficiently large $n$ such that
\begin{equation}\label{4-6}
  \left| \int_{\R^2}\left[\omega_0(x)-w_n(x)\right][(u-\gamma_k)_+^t-(u-\gamma_k)_+]\,\d x\right|\le \frac{\varepsilon}{4}t,\ \ \ \forall\, t\ge 0.
\end{equation}
We now fix such an $n$. By virtue of \eqref{4-1}, $u$ remains constant on each connected component of $\partial D_j$. Using Lemmas \ref{key2} and \ref{key3}, we can get
\begin{equation}\label{4-7}
  \left|\sum_{j=1}^{M_n}\alpha_j\int_{D_j}[(u-\gamma_k)_+^t-(u-\gamma_k)_+]\,\d x \right|=o(t)
\end{equation}
as $t\to 0$. In fact, this only requires repeating the proof for \eqref{3-66}. Hence, the details are omitted here. Combining \eqref{4-5}, \eqref{4-6}, and \eqref{4-7}, we obtain \eqref{4-4}. The proof of this case is thus complete.

\subsection{The case $\Omega \geq \sup \omega_0 / 2$}

In this case, $\Omega \ge 0$, and $u = \mathcal{N} \ast \omega_0+\frac{\Omega}{2}|x|^2$ is weakly subharmonic. Let us now turn our attention to the function $\tilde{u}=-u$. Then $\tilde{u}$ is weakly superharmonic, and
\begin{align*}
\begin{cases}
-\Delta \tilde{u}=-\omega_0+2\Omega \ \ \text{in}\ \R^2, & \\
\tilde{u} \ \text{is constant on each connected component of a regular level set of}\ \omega_0 .
\end{cases}
\end{align*}
By repeating the proof of the previous case, replacing $u$ with $\tilde{u}$, we obtain the desired result. Since the details of the proof are essentially identical, they are omitted here. This concludes the proof of this case.

In summary, we have shown that if a uniformly rotating smooth solution $\omega_0$ with compact support has an angular velocity satisfying $\Omega \leq \inf \omega_0 / 2$ or $\Omega \geq \sup \omega_0 / 2$, then it must be radial. This concludes the proof of Theorem \ref{thm2}.

\bigskip
\noindent \textbf{Data Availability}: Data sharing is not applicable to this article as no datasets were generated or analyzed during the current study.

\smallskip
\noindent \textbf{Conflict of interest}: The authors declare that they have no conflict of interest.

\appendix

\section{Non-radial uniformly rotating vortex patches with sign-changing vorticity} \label{s5}

In this appendix, we establish a bifurcation result for uniformly rotating vortex patch solutions with sign-changing vorticity. The purpose is to complement Theorem \ref{thm} by showing the sharpness of the admissible interval for the angular velocity.

Consider the vortex patch solution of \eqref{1-1} given by
\be \label{E:Doubly-VP}
\omega =1_{D_1} - \kappa 1_{D_2},
\ee
where $D_2 \subset D_1$ are simply-connected bounded domains,
and $\kappa>1$ is a parameter. When $\kappa=1$, \eqref{E:Doubly-VP} recovers the doubly-connected vortex patches studied in \cite{Hmidi2016,Hmi2016MR3545942,wang2024degeneratebifurcationstwofolddoublyconnected}.

The main unknowns in the patch solution \eqref{E:Doubly-VP} are the vortical domains $D_j$, for $j=1,2$. By the Riemann mapping theorem \cite{Pommerenke1992}, the complement of $D_j$ is parameterized by the conformal mappings
\be \label{E:Conformal}
\Phi_j(z) = a_j z + G_j(z) : \mathbb{C} \setminus B_1 \rightarrow \mathbb{C} \setminus D_j, \; j=1,2,
\ee
where $a_1=1,a_2=b \in (0,1)$, and
\[
G_j(z) = \sum_{n \geq 0} \frac{b_n^j}{z^n}, \quad j=1,2,
\]
describe the shapes of the domains. Since $G_j(\,\cdot \,)$ is holomorphic in the exterior disk $\mathbb{C}\setminus B_1$, this map is unqiuely determined by its trace on the unit circle
\[
g_j(z) :=  G_j(z) \big|_{\mathbb{T}} =\sum_{n \geq 0}  \frac{b_n^j}{z^n},\quad |z|=1.
\]

By the conformal mapping parameterization, the equations of uniformly rotating vortex patches \eqref{E:1-3} is equivalent to the contour equations
\be \label{E:Contour}
\begin{split}
& {\rm Im} \bigg( \left(2 \Omega \overline{\Phi_1(w)} + I_1(\Phi_1(w)) - \kappa I_2(\Phi_1(w)) \right) \Phi^\prime_1(w) w \bigg) =0, \\
	& {\rm Im} \bigg( \left(2 \Omega \overline{\Phi_2(w)} + I_1(\Phi_2(w)) - \kappa I_2(\Phi_2(w)) \right) \Phi^\prime_2(w) w \bigg) =0,
\end{split}
\ee
where
\be \label{E:Cauchy-m}
 I_j(z): = \frac{1}{2\pi i}\int_\mathbb{T} \frac{\overline{z}-\overline{\Phi_j(\xi)}}{z-\Phi_j(\xi)}\Phi_j'(\xi)d \xi, \quad j=1,2
\ee
are Cauchy-type integral operators.

A direct computation demonstrates that concentric disks are the solutions of \eqref{E:Contour} for any $\Omega \in \R$. To construct  non-radial solutions, we employ the Crandall-Rabinowitz bifurcation theorem \cite{Crandall1971}. Here we follow the formulation given by Buffoni and Toland \cite{Buffoni2003}:
\begin{lemma}\cite[Theorem 8.3.1]{Buffoni2003} \label{L:CR}
	Suppose that $X$ and $Y$ are Banach spaces, that $F: X \times \R \rightarrow Y$ is of class $C^k$, $k \geq 2$, and that $F(0,\lambda)=0 \in Y$ for all $\lambda \in \R$. Suppose that
	\begin{enumerate}
		\item $L=\p_x F(0,\lambda_0)$ is a Fredholm operator of index $0$;
		\item ${\rm Ker} L = span(\xi_0), \xi_0 \in X$, $dim {\rm Ker} \,L=1$.
		\item The transversality condition holds:
		\[
		\p_{\lambda,x}^2 F(0,\lambda_0) (\xi_0,1) \notin {\rm Ran}\, L.
		\]
	\end{enumerate}
	Then $(0,\lambda_0)$ is a bifurcation point. More precisely, there exists $\epsilon>0$ and a branch of solution
	\[
	\left\{(x,\lambda)=(s\chi(s),\Lambda(s)), |s| < \epsilon, s \in \R \right\} \subset X \times \R ,
	\]
	such that $\Lambda(0)=\lambda_0; \chi(0)=\xi_0$;
    \[
	F(s\chi(s),\Lambda(s))=0 \text{ for all $|s|<\epsilon$,}
	\]
	\[
	\Lambda \text{ and } s\rightarrow s \chi(s) \text{ are of class $C^{k-1}$, and $\chi$ is of class $C^{k-2}$, on $(-\epsilon,\epsilon)$}
	\]
	There exists and open set $U_0 \subset X \times \R$ such that $(0, \lambda_0) \in U_0$ and
	\[
	\left\{ (x, \lambda) \in U_0: F(x, \lambda)=0, x \neq 0 \right\} = \left\{(s\chi(s), \Lambda(s)), 0<|s|<\epsilon \right\}.
	\]
	In particular, if $F$ is analytic, $\Lambda$ and $\chi$ are analytic functions on $(-\epsilon,\epsilon)$.
\end{lemma}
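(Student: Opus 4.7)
The plan is to prove this classical bifurcation result via \emph{Lyapunov--Schmidt reduction}, converting the infinite-dimensional equation $F(x,\lambda)=0$ into a scalar equation on the one-dimensional kernel direction. Since $L:=\partial_x F(0,\lambda_0)$ is Fredholm of index zero with $\mathrm{Ker}\, L=\mathrm{span}(\xi_0)$, I would choose topological complements so that $X=\mathrm{span}(\xi_0)\oplus Z$ and $Y=\mathrm{Ran}\, L\oplus W$ with $\dim W=1$. Let $P:Y\to\mathrm{Ran}\, L$ and $Q=I-P:Y\to W$ be the associated continuous projections, and decompose any $x\in X$ as $x=s\xi_0+z$ with $s\in\R$ and $z\in Z$.

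First I would split $F(x,\lambda)=0$ into the auxiliary equation $PF(s\xi_0+z,\lambda)=0$ and the bifurcation equation $QF(s\xi_0+z,\lambda)=0$. The linearization of the auxiliary equation with respect to $z$ at the origin equals $P\partial_x F(0,\lambda_0)\big|_Z=L\big|_Z:Z\to\mathrm{Ran}\, L$, which is an isomorphism by construction. The implicit function theorem therefore yields a $C^{k}$ map $(s,\lambda)\mapsto z(s,\lambda)$, defined in a neighborhood of $(0,\lambda_0)$, solving the auxiliary equation with $z(0,\lambda_0)=0$; moreover $F(0,\lambda)\equiv 0$ forces $z(0,\lambda)\equiv 0$, and differentiating the auxiliary equation in $s$ at $(0,\lambda_0)$ yields $L\,\partial_s z(0,\lambda_0)=0$, hence $\partial_s z(0,\lambda_0)=0$ since $\partial_s z\in Z$.

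Substituting into the bifurcation equation produces a \emph{scalar} equation
\begin{equation*}
\phi(s,\lambda):=QF\bigl(s\xi_0+z(s,\lambda),\lambda\bigr)=0,
\end{equation*}
and since $\phi(0,\lambda)\equiv 0$ one may factor $\phi(s,\lambda)=s\,\psi(s,\lambda)$ with $\psi(s,\lambda):=\int_0^1 \partial_s\phi(\tau s,\lambda)\,\d\tau$ of class $C^{k-1}$. A direct computation shows that $\psi(0,\lambda_0)=0$ and that $\partial_\lambda \psi(0,\lambda_0)$ equals the scalar obtained by projecting $\partial^2_{\lambda,x}F(0,\lambda_0)(\xi_0,1)$ onto $W$ along $\mathrm{Ran}\, L$, which is nonzero precisely by the transversality hypothesis. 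Applying the scalar implicit function theorem to $\psi(s,\lambda)=0$ at $(0,\lambda_0)$ produces the curve $\lambda=\Lambda(s)$ with $\Lambda(0)=\lambda_0$. Setting $\chi(s):=\xi_0+s^{-1}z(s,\Lambda(s))$ (extended at $s=0$ by $\chi(0):=\xi_0$, which is consistent because $z(s,\Lambda(s))=O(s^2)$ by the vanishing of $\partial_s z(0,\lambda_0)$) then delivers the branch $(s\chi(s),\Lambda(s))$, and the local uniqueness on some $U_0$ follows from the uniqueness clauses of both applications of the implicit function theorem.

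The main obstacle is the regularity bookkeeping after the division by $s$: one has to verify that $\psi$ is jointly $C^{k-1}$ on a neighborhood of $(0,\lambda_0)$ (not merely continuous), which in turn forces $\chi$ to be only $C^{k-2}$ even though $z$ is $C^{k}$; and in the analytic case, one must observe that the integral formula for $\psi$ preserves analyticity, since dividing a function vanishing at $s=0$ by $s$ commutes with power-series expansion. A secondary subtlety is checking that the stated transversality condition $\partial^2_{\lambda,x}F(0,\lambda_0)(\xi_0,1)\notin\mathrm{Ran}\, L$ is genuinely equivalent to the nonvanishing of $\partial_\lambda\psi(0,\lambda_0)$, which is what allows the terminal scalar implicit function theorem to be invoked.
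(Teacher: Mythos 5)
Your Lyapunov--Schmidt reduction is correct and is precisely the standard proof of the Crandall--Rabinowitz theorem; the paper itself does not prove this lemma but simply cites it as Theorem 8.3.1 of Buffoni--Toland, whose argument is the same reduction you outline. All the key points check out: $L\big|_Z$ is an isomorphism onto $\mathrm{Ran}\,L$, the vanishing of $\partial_s z(0,\lambda_0)$ and $z(0,\lambda)$ gives both $\psi(0,\lambda_0)=0$ and the $O(s^2)$ bound needed for $\chi$ to be $C^{k-2}$, and $\partial_\lambda\psi(0,\lambda_0)=Q\,\partial^2_{\lambda,x}F(0,\lambda_0)(\xi_0,1)$ is nonzero exactly by transversality.
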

\begin{remark}
To apply Lemma \ref{L:CR}, it suffices that $\p_x F, \p_\lambda F$ and $\p_{x \lambda}^2 F$ are continuous.
\end{remark}

We recall the functional setting introduced in \cite{Hmidi2016} for analyzing the bifurcation of uniformly rotating vortex patches. Let $\alpha>0$. By the symmetry,  we consider uniformly rotating vortex patches which the vortical domain are parameterized by functions from the space
\[
C_{real}^{1+\alpha}(\mathbb{T}) :=\left\{ f\in C^{1+\alpha}(\mathbb{T}) \; \big| \; f(w) = \sum_{n \geq 0}  \frac{a_n}{w^n}, \,a_n \in \R, |w|=1 \right\},
\]
providing a smooth conformal parametrization of the vortex patch boundaries. Define the function space $X:=C_{real}^{1+\alpha}(\mathbb{T}) \times C_{real}^{1+\alpha}(\mathbb{T})$. We consider a small neighbourhood $0 \in V:= B(0,\epsilon_0) \subset X$ serving as the space of perturbations of the boundaries of the inner and outer vortical domains.

For the target space, we let $Y:=H \times H$, where
\[
H:= \left\{ h \in C^\alpha(\mathbb{T}) \; \big|\; h(\theta) = \sum_{n \geq 1} b_n \sin n \theta, b_n \in \R \right\}.
\]
This space ensures that the equations governing the evolution of the vortex patch boundaries remain well-posed in a function space capturing their nontrivial deformations. This functional framework allows us to formulate the bifurcation problem as a nonlinear mapping
\[
F(\lambda,g_1,g_2)=(F_1,F_2): X \times \R \rightarrow Y,
\]
where  $\lambda: = 1-2\Omega$ and
\[
\begin{split}
	& F_1(\lambda,g_1,g_2):={\rm Im} \big\{ \left((1-\lambda) \overline{\Phi_1(w)} + I_1(\Phi_1(w)) - \kappa I_2(\Phi_1(w)) \right) \Phi^\prime_1(w) w \big\}, \\
	& F_2(\lambda,g_1,g_2): ={\rm Im} \big\{ \left((1-\lambda) \overline{\Phi_2(w)} + I_1(\Phi_2(w)) - \kappa I_2(\Phi_2(w)) \right) \Phi^\prime_2(w) w \big\},
\end{split}
\]
encodes the governing contour equations \eqref{E:Contour}. To apply Lemma \ref{L:CR}, it suffices to verify conditions (1)–(3).

First, we have that $F:X \times \R \rightarrow Y$ is a smooth map, i.e., $\p_{g} F,\p_\lambda F, \p_{g \lambda}^2 F$ are continuous, where $g=(g_1,g_2) \in X$. The proof follows arguments analogous  to those in \cite{Hmidi2016}, which we omit here for brevity.

Next, we analyze the linearization of $F$ on the trivial solutions. Let $h(w) := \sum_{n \geq 0}  \frac{\alpha_n}{{w}^n}$, $k(w)= \sum_{n \geq 0} \frac{\beta_n}{{w}^n}$. Analogously to the computation in \cite[Section 7]{Hmidi2016}, the linearized operator is given by
\be
{D} F(\lambda,0,0)(h,k)(w) = \begin{pmatrix}
	L_\lambda^{1}(h,k)(w) \\
	L_\lambda^{2}(h,k)(w)
\end{pmatrix},
\ee
where
\[
\begin{split}
L_\lambda^{1}(h,k)(w) & = {\rm Im} \left(1(1-\lambda+\kappa b^2) \overline{w}h(w) + (\kappa b^2 -\lambda)h^\prime(w) + \kappa b^2 \overline{w} k(\frac{w}{b}) \right), \\
L_\lambda^{2}(h,k)(w) & = b\, {\rm Im} \left(- \overline{w} h(\frac{w}{b}) + \lambda \overline{w} k(w) + (\kappa -\lambda)k^\prime(w) \right).
\end{split}
\]
Expressing the lineazied operator $DF(\lambda,0,0)$ in terms of Fourier multipliers, we get
\[
DF(\lambda,0,0) \begin{pmatrix} \alpha_ n \\
	\beta_n
	\end{pmatrix} \bar{w}^n = M_n  \begin{pmatrix} \alpha_ n \\
	\beta_n
	\end{pmatrix} \sin (n+1) \theta,
\]
with the matrix multiplier
\[
M_n:= \begin{pmatrix} 1-\lambda + \kappa b^2 + n(kb^2-\lambda) & - \kappa b^{n+2} \\
	b^{n+1} & b(n(\kappa-\lambda)-\lambda)
\end{pmatrix}.
\]
The operator $DF(\lambda,0,0)$ is an invertible Fredholm operator with index $0$ if and only if $M_n$ is invertible for all $n \geq 0$, and $\lambda \notin \{\kappa b^2, \kappa \}$.

On the other hand, the linearized operator $DF(\lambda,0,0)$ has non-trivial kernel if
\be \label{E:multi}
\Delta_n(\lambda):= \lambda^2 - \frac{(n+1)\kappa b^2 + n\kappa +1}{n+1} \lambda + \frac{\kappa b^{2n+2} + n \kappa +n(n+1)b^2 \kappa^2}{(n+1)^2} = 0,
\ee
for some $n \geq 0$, and the corresponding eigenvector is given by
\be \label{E:Eigen-vec}
v_0:=\left((n(\kappa-\lambda)-\lambda)\bar{w}^n, - b^n \bar{w}^n\right).
\ee

Equation \eqref{E:multi} yields a pair of distinct eignvalues to $DF(\lambda,0,0)$
\be \label{E:lambda}
\lambda_n^\pm: = \frac{\kappa b^2}{2} +  \frac{n \kappa +1}{2(n+1)}  \pm  \sqrt{\bigg(\frac{\kappa b^2}{2} - \frac{n\kappa-1}{2(n+1)}\bigg)^2 - \frac{\kappa b^{2n+2}}{(n+1)^2} },
\ee
if
\[
S_n:=\bigg(\frac{\kappa b^2}{2} - \frac{n\kappa-1}{2(n+1)}\bigg)^2 - \frac{\kappa b^{2n+2}}{(n+1)^2} >0, \quad n \geq 0.
\]

When $n=0$, we have
\[
\lambda_0^+ = \begin{cases}
	\kappa b^2 & \;\; b > \frac{1}{\sqrt{\kappa}} \\
	1 & \;\; b <  \frac{1}{\sqrt{\kappa}}
\end{cases}, \quad \lambda_0^-  = \begin{cases}
1 & \;\; b > \frac{1}{\sqrt{\kappa}} \\
\kappa b^2 & \;\; b <  \frac{1}{\sqrt{\kappa}}
\end{cases}.
\]
This result indicates that the precise values of $\kappa$ and $b$ could affect the spectral structure essentially. In particular, assume there exists $n >0$ such that $\lambda_n^\pm =  \kappa b^2$, we get
\[
\bigg(\frac{\kappa b^2}{2} - \frac{n\kappa-1}{2(n+1)}\bigg)^2 - \frac{\kappa b^{2n+2}}{(n+1)^2} = \left(\frac{\kappa b^2}{2} - \frac{n\kappa+1}{2(n+1)} \right)^2.
\]
That implies
\[
b^{2n+2} - (n+1)b^2 + n=0.
\]
There exists a unique $b_n \in (0,1)$ such that $\lambda_n^\pm = \kappa b^2$. Therefore, for $b \in (0,1) \setminus \{b_n\}_{n \geq 1}$, $\lambda_0^+ (\text{ or } \lambda_0^-) = \kappa b^2$ is a simple eigenvalue of $DF(\lambda,0,0)$.

In contrast to the case $\kappa=1$ where the eigenvalues $\lambda_n^\pm$ and $\lambda_m^\pm$ are distinct for $n \neq m$.
When $\kappa>1$, there may exist $n \neq m \geq 0$ such that $\{\lambda_n^+,\lambda_n^-\} \cap \{\lambda_m^+,\lambda_m^-\} \neq \emptyset$, as demonstrated for $n=0$ above. Namely,
 $\lambda_n^\pm$ may not be a simple eigenvalue even when $S_n>0$. To avoid complex high dimensional bifurcations, we adjust the parameter $b$ here.

Define the intersection set
\[
\Theta_{n,m}:=\{ b\in (0,1) \big| \{\lambda_n^+,\lambda_n^-\} \cap \{\lambda_m^+,\lambda_m^-\} \neq \emptyset, \; 0 \leq n \neq m \},
\]
and let
\be
\Theta:= \bigcup_{n \neq m \geq 0}\Theta_{n,m}.
\ee
For nonempty $\Theta_{n,m} \neq \emptyset$, the eigenvalue collision condition yields:
\[
\frac{1-\kappa}{2} \left( \frac{1}{n+1}- \frac{1}{m+1} \right) = \pm \sqrt{S_n} \mp \sqrt{S_m}.
\]
Squaring both sides, we get the polynomial equation
\be \label{E:S_n}
\left((\frac{1-\kappa}{2})^2 \left( \frac{1}{n+1}- \frac{1}{m+1} \right)^2 - S_n - S_m\right)^2 = 4 S_n S_m.
\ee
For given $\kappa>1,n \neq m \geq 0$, equation \eqref{E:S_n} constitutes a polynomial in $b$ of degree $4(\max(n,m)+1)$. Thus $\Theta_{n,m}$ contains  at most $4(\max(n,m)+1)$ isolated points, then $\Theta$ is a countable set.

Moreover, for each $n \geq 0$, solving equation $S_n=0$ produces at most $n+1$ distinct $b$. Define $S:= \cup_{ n\geq 0} \left\{b \in (0,1)\; \big| \; S_n = 0 \right\}$
For $b \in (0,1)\setminus (\Theta \cup S)$, all eigenvalue $\lambda_{n}^\pm$ of $DF(\lambda,0,0)$ are simple, ensuring the applicability of Lemma \ref{L:CR}.

Finaly, we examine the transverality condition. One can explicitly compute the derivatives of $F$ as follows:
\[
\p_\lambda D F(g_1,g_2,\lambda)(h,k) = \begin{pmatrix}
	-{\rm Im} \left( \overline{h(w)} \Phi_1^\prime(w) w + \overline{\Phi_1(w)} h^\prime(w) w \right) \\
	- {\rm Im} \left( \overline{k(w)} \Phi_2^\prime(w) w + \overline{\Phi_2(w)} k^\prime(w) w \right)
\end{pmatrix}.
\]
Thus we get
\[
\p_\lambda D F(\lambda,0,0)(h,k)=\begin{pmatrix}
	-{\rm Im} \left( \overline{h(w)} w + h^\prime(w) \right) \\
	-{\rm Im} \left( \overline{k(w)} w + k^\prime(w) \right)
\end{pmatrix}.
\]
Combining the eigenvector $\hat{v}_0$ given in \eqref{E:Eigen-vec} leads
\[
\p_{\lambda} DF(\lambda,0,0)(\hat{v}_0) = -(n+1) \begin{pmatrix}
	n(\kappa-\lambda) - \lambda \\
	-b^{n+1}
\end{pmatrix} \sin(n+1)\theta.
\]
If $\p_{\lambda} DF(\lambda,0,0)(\hat{v}_0) \in Ran(DF(\lambda,0,0))$, then
\[
\begin{pmatrix}
	n(\kappa-\lambda) - \lambda \\
	-b^{n+1}
\end{pmatrix}
\]
is a scalar of one column of the matrix $M_n$, that is,
\be
\left(n(\kappa-\lambda)-\lambda\right)^2 = \kappa b^{2n+2}.
\ee
Combining equation \eqref{E:multi}, we get
\[
\left(n(\kappa-\lambda)-\lambda \right) \left(1-\lambda + \kappa b^2 + n(\kappa b^2 -\lambda) \right) + \left(n(\kappa -\lambda) - \lambda \right)^2 =0.
\]
Thus either $\lambda = \frac{n}{n+1} \kappa$, or $\lambda = \frac{\kappa b^2}{2} + \frac{n \kappa +1}{2(n+1)}$. We claim that such $\lambda$ do not coincide with the eigenvalues $\lambda_n^\pm$. Clearly, $\lambda_n^\pm \neq \frac{\kappa b^2}{2} + \frac{n \kappa +1}{2(n+1)}$ when $S_n>0$. It suffices to demonstrate $\lambda_n^\pm  \neq \frac{n}{n+1} \kappa$. If not, we would get
\be
\frac{\kappa b^2}{2} + \frac{n\kappa +1}{2(n+1)} \pm  \sqrt{\bigg(\frac{\kappa b^2}{2} - \frac{n\kappa-1}{2(n+1)}\bigg)^2 - \frac{\kappa b^{2n+2}}{(n+1)^2} } = \frac{n}{n+1} \kappa.
\ee
That is impossible since otherwise one would has
\[
\bigg(\frac{\kappa b^2}{2} - \frac{n\kappa-1}{2(n+1)}\bigg)^2 - \frac{\kappa b^{2n+2}}{(n+1)^2} = \left(\frac{n \kappa-1}{2(n+1)} - \frac{\kappa b^2}{2} \right)^2.
\]
Therefore, for $b \in (0,1) \setminus \Theta \cup S$, $\{\lambda_n^\pm\}_{n \geq 0}$ are simple eigenvalues of the linearizied operator $DF(\lambda,0,0)$.


Now let us consider the functional space
\[
C_{real,m}^{1+\alpha}(\mathbb{T}) :=\left\{ f\in C^{1+\alpha}(\mathbb{T}) \; \big| \; f(w) = \sum_{n \geq 1}  \frac{a_{n(m+1)-1}}{w^{n(m+1)-1}}, \,a_n \in \R, |w|=1 \right\}.
\]
For any $f \in C_{real,m}^{1+\alpha}(\mathbb{T})$, the conformal mapping
\[
\Phi_j(z) = a_j z + \sum_{n \geq 1} \frac{a_{n(m+1)-1}}{z^{n(m+1)-1}} = a_j z \left(  +  a_j^{-1} \sum_{n \geq 1}  \frac{a_{n(m+1)-1}}{z^{n(m+1)}} \right)
\]
parameterizes the domain with $m+1$-fold symmetry, which yields
\[
\Phi_j(e^{\frac{2\pi i}{m}}z) = e^{\frac{2\pi i}{m}} \Phi_j(z), \l;\; j=1,2.
\]
The nonlinear map $F$ is equivariant under the rotation action $e^{\frac{2\pi i}{m}} \cdot z$ thus map $F:X_m \times \R \rightarrow Y_m$ is well-defined, where $X_m:= C_{real,m}^{1+\alpha}(\mathbb{T}) \times C_{real,m}^{1+\alpha}(\mathbb{T})$ and $Y_m:=H_m \times H_m$,
\[
H_m:= \left\{ h \in C^\alpha(\mathbb{T}) \; \big|\; h(\theta) = \sum_{n \geq 1} b_{n(m+1)} \sin n(m+1) \theta,\; b_{n(m+1)} \in \R \right\}.
\]
Apply Lemma \ref{L:CR} on the nonlinear map equation $0= F(\lambda,g_1,g_2):X_m \times \R  \rightarrow Y_m$, we establish that
\begin{theorem}
Given $b \in (0,1)\setminus (\Theta \cup S)$ and $\kappa>1$. Let $m \geq 1$ satisfy
\[
\left(\frac{m\kappa-1}{2(m+1)}-\frac{\kappa b^2}{2} \right)^2 - \frac{\kappa b^{2(m+1)}}{(m+1)^2}>0.
\]
There exists a curve of non-radial $m+1$-fold symmetric vortex patch solutions
\[
\omega= 1_{D_1} - \kappa 1_{D_2}
\]
bifurcating from the concentric disks $\omega = 1_{B_1} - \kappa 1_{B_b}$ at  the angular velocity
\[
\Omega_m^{\pm}:=  \frac{1-\kappa b^2}{4} + \frac{1-\kappa}{4} + \frac{\kappa-1}{4(m+1)} \mp \frac{1}{2} \sqrt{\bigg(\frac{\kappa b^2}{2} - \frac{m\kappa-1}{2(m+1)}\bigg)^2 - \frac{\kappa b^{2m+2}}{(m+1)^2} }.
\]
In particular, there exists $1$-fold symmetric vortex patch solutions
\[
\omega= 1_{D_1} - \kappa 1_{D_2}
\]
bifurcating from the concentric disks $\omega = 1_{B_1} - \kappa 1_{B_b}$ at  the angular velocity at the angular velocity $\Omega = \frac{1-\kappa b^2}{2}$.
\end{theorem}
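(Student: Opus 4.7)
The plan is to apply the Crandall--Rabinowitz bifurcation theorem (Lemma \ref{L:CR}) to the nonlinear map $F:X_m \times \R \to Y_m$ at the parameter values $\lambda = \lambda_m^\pm$ obtained from \eqref{E:lambda} with $n$ replaced by $m$. Since the conformal parametrization $\Phi_j$ has the equivariance $\Phi_j(e^{2\pi i/(m+1)} z) = e^{2\pi i/(m+1)} \Phi_j(z)$ when $g_j \in C^{1+\alpha}_{real,m}(\mathbb{T})$, and since this equivariance is preserved by $F$, the restriction to the $(m+1)$-fold symmetric spaces is well-defined. The trivial branch $F(\lambda,0,0) \equiv 0$ corresponds to concentric disks, and the smoothness requirement $\partial_g F, \partial_\lambda F, \partial^2_{g\lambda} F \in C^0$ follows by the same arguments as in \cite{Hmidi2016}.

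Next, I would verify the three hypotheses of Lemma \ref{L:CR}. On $X_m$, the linearization $DF(\lambda,0,0)$ acts as the matrix multiplier $M_{k(m+1)-1}$ on the $k$-th allowed Fourier harmonic ($k \geq 1$). As $k\to\infty$ the determinants $\det M_{k(m+1)-1}$ grow unboundedly, so $DF(\lambda,0,0)$ is a compact perturbation of an invertible operator; combined with the invertibility of $M_n$ for $n \neq m$ granted by $b \notin \Theta \cup S$ and $\lambda_m^\pm \notin \{\kappa b^2, \kappa\}$, this yields the Fredholm index 0 property. For the kernel, the assumption $S_m>0$ ensures that $\lambda_m^\pm$ are distinct real roots of $\Delta_m(\lambda)=0$, and the condition $b \notin \Theta \cup S$ rules out collisions with any $\lambda_{k(m+1)-1}^\pm$, $k \geq 2$, so $\ker DF(\lambda_m^\pm,0,0)$ is spanned exactly by $\hat{v}_0$ from \eqref{E:Eigen-vec}.

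For transversality, the explicit computation of $\partial_\lambda DF(\lambda,0,0)(\hat v_0)$ carried out in the text shows that this vector fails to lie in $\operatorname{Ran}(DF(\lambda,0,0))$ unless $\lambda = \tfrac{m\kappa}{m+1}$ or $\lambda = \tfrac{\kappa b^2}{2} + \tfrac{m\kappa+1}{2(m+1)}$. The first value is excluded because $\lambda_m^\pm = \tfrac{m\kappa}{m+1}$ would force $\left(\tfrac{m\kappa-1}{2(m+1)} - \tfrac{\kappa b^2}{2}\right)^2 = S_m + \tfrac{\kappa b^{2m+2}}{(m+1)^2}$, contradicting $S_m>0$; the second is excluded because $\lambda_m^\pm$ differs from the midpoint of the quadratic by $\pm \sqrt{S_m} \neq 0$. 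Having verified all three conditions, Lemma \ref{L:CR} produces a curve of nontrivial $(m+1)$-fold symmetric solutions bifurcating from the concentric disks at $\lambda = \lambda_m^\pm$. Translating via $\Omega = (1-\lambda)/2$ gives the claimed formula for $\Omega_m^\pm$; the $1$-fold case is obtained by specializing to $m=0$ at the simple eigenvalue $\lambda = \kappa b^2$.

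The main obstacle is the simplicity of the kernel. Unlike the doubly-connected case $\kappa=1$ studied in \cite{Hmidi2016}, when $\kappa>1$ the two branches $\{\lambda_n^+,\lambda_n^-\}$ and $\{\lambda_{n'}^+,\lambda_{n'}^-\}$ for $n\neq n'$ can genuinely intersect, producing higher-dimensional kernels where Lemma \ref{L:CR} fails to apply directly. The exceptional sets $\Theta$ and $S$ introduced above encode precisely these collisions; showing that $\Theta$ is countable (as $\Theta_{n,m}$ consists of roots of a polynomial equation of controlled degree) and that $S$ is discrete is the key combinatorial input that allows one to rule out collisions by a generic choice of $b \in (0,1) \setminus (\Theta \cup S)$, thereby preserving the applicability of the one-parameter bifurcation theorem.
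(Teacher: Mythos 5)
Your proposal follows essentially the same route as the paper: the Crandall--Rabinowitz theorem applied to the contour-dynamics map on the $(m+1)$-fold symmetric spaces $X_m\times\R\to Y_m$, with the Fourier-multiplier matrices $M_n$ giving the eigenvalues $\lambda_n^\pm$, the exceptional sets $\Theta\cup S$ guaranteeing a one-dimensional kernel, and the same explicit transversality computation ruling out $\lambda=\tfrac{m\kappa}{m+1}$ and the midpoint value. The only slight imprecision is your stated reason for excluding $\lambda_m^\pm=\tfrac{m\kappa}{m+1}$ (the identity you write is just the definition of $S_m$); the actual contradiction, as in the paper, is that this equality would force $\kappa b^{2m+2}/(m+1)^2=0$, which is impossible for $b>0$.
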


Let $m \rightarrow \infty$, we get
\[
\Omega_m^+ \rightarrow \frac{1-\kappa b^2}{2}, \quad \Omega_m^{-} \rightarrow \frac{1-\kappa}{2}.
\]
The latter exactly coincides the threshold $\inf \omega /2$ appears in Theorem \ref{thm}, and the former converges to another bound $\sup \omega /2$ as $b \rightarrow 0$.








\end{document}